\DeclareMathOperator{\FIN}{FIN}
\DeclareMathOperator{\INF}{INF}
\DeclareMathOperator{\COF}{COF}
\DeclareMathOperator{\REC}{REC}
\DeclareMathOperator{\Sym}{Sym}
\DeclareMathOperator{\sgr}{sgr}
\newtheorem{theorem}{Theorem}
\newtheorem{cor}[theorem]{Corollary}
\newtheorem{definition}[theorem]{Definition}
\newtheorem{lemma}[theorem]{Lemma}
\newcommand{\ints}{\mathbb{Z}}
\newcommand{\nats}{\mathbb{N}}
\newcommand{\bra}{\langle}
\newcommand{\ket}{\rangle}
\renewcommand{\arraystretch}{1.2}
\numberwithin{equation}{section}
\begin{document}
\title{Detecting properties from descriptions of groups}
\author{Iva Bilanovic} 	
\author{Jennifer Chubb}
\author{Sam Roven} 	
\address{Department of Mathematics, George Washington University, Washington D.C.}
\email{ivabilanovic@gwu.edu}
\address{Department of Mathematics, University of San Francisco, San
Francisco, CA}
\email{jcchubb@usfca.edu}
\address{Department of Mathematics, University of Washington, Seattle, WA}
\email{roven@uw.edu}

\subjclass[2010]{03D80 ,20F10, 06F15, 03D40, 20F05}

\begin{abstract}

We consider whether given a simple, finite description of a group in the form of an algorithm, it is possible to algorithmically determine if the corresponding group has some specified property or not. When there is such an algorithm, we say the property  is \textit{recursively recognizable} within some class of descriptions.  When there is not, we ask how difficult it is to detect the property in an algorithmic sense.

We consider descriptions of two sorts: first, recursive presentations in terms of generators and relators, and second, algorithms for computing the group operation.  For both classes of descriptions, we show that a large class of natural algebraic properties, \emph{Markov properties}, are not recursively recognizable, indeed they are $\Pi^0_2$-hard to detect in recursively presented groups and $\Pi^0_1$-hard to detect in  computable groups.  These theorems suffice to give a sharp complexity measure for the detection problem of a number of typical group properties, for example, being abelian, torsion-free, orderable.  Some properties, like being cyclic, nilpotent, or solvable, are much harder to detect, and we give sharp characterizations of the corresponding detection problems from a number of them.  

We give special attention to orderability properties, as this was a main motivation at the beginning of this project.

\end{abstract}


\maketitle











\section{Introduction}

The complexity of the word, conjugacy, and isomorphism problems of finitely presented groups have long been of interest in combinatorial group theory and algebra in general (\cite{Dehn1911, Dehn1912}).  Questions of whether and how the presentation of a group in terms of generators and relators can shed any light on the existence of algorithms that uniformly solve these problem, or that can determine whether or not the group has some other property of interest,  have been studied, though primarily for finite presentations of groups.  Here, we generalize some of these results to \emph{recursive presentations} of groups.

Additionally, in computable structure theory, we can consider the detection of a property from another type of basic description of a group, its atomic diagram (i.e., its multiplication table).  A group is \textit{computable} when it is computable as a set and the group operation is computable, that is, finitely describable in the form of an algorithm.

From both vantage points, we are asking whether there an algorithm that may be applied uniformly \emph{for all descriptions} in some class of descriptions of groups, that will answer the question, \begin{quote}
Does the group with description $D$ have property $P$ or not?
\end{quote}
\noindent When there is such an effective procedure, we say the property is \emph{recursively recognizable} within that class of descriptions.  

In the 1950's, Adian and Rabin showed that an entire class of properties, \emph{Markov properties} (see Definition \ref{markov_def} below), are not recursively recognizable in the class of finitely presented groups (\cite{Adian1957a, Adian1957b, Rabin1958}).  Among many others, these properties include having a decidable word problem, being nilpotent, abelian, simple, torsion-free, and free. (See \cite{Miller1992} for an excellent survey.)  

In the language of the Kleene-Mostowski arithmetical hierarchy of sets, Adian and Rabin's proofs established the $\Sigma^0_1$-hardness (see Definition \ref{complete} below) of detection of Markov properties in the class of finitely presented groups.  Some of these properties are immediately seen to be $\Sigma^0_1$-complete via their $\Sigma^0_1$ characterizing formulas. For example, a finite presentation of a group yields the trivial group if and only if \textit{there exists a finite sequence of Tietze transformations} that transform the given presentation into $\bra x~|~x\ket$, which is a $\Sigma^0_1$ characterization of triviality for finitely presented groups (see, for example, \cite{MagnusCGT}). 

In the 1960's, Boone and Rogers considered questions posed by Whitehead and Church in the late 1950's \cite{BooneRogers1966}.  Whitehead asked if the collection of finite presentations of groups having decidable word problem, though it is not a recursive set, could be recursively enumerated.  Church asked about the possible existence of a universal \textit{partial} algorithm capable of solving the word problem for all finitely presented groups that have decidable word problem.  Boone and Rogers answered both questions by establishing the precise complexity of the question ``Given a finite presentation of a group, does the corresponding group have a decidable word problem?''  They showed that this question is $\Sigma^0_3$-complete in the arithmetical hierarchy, and negative answers to the questions of Whitehead and Church follow.

The precise complexity of identifying other properties from group presentations have since been studied.  For example, in the 1990's, Lempp showed that detecting torsion-freeness (a Markov property) is $\Pi^0_2$-complete in the class of finitely presented groups \cite{Lempp1997}.

In computable structure theory, recursive recognizability of a property amounts to the index set of groups that exhibit the property being a computable set relative to the set of indicies of all computable groups.  An \textit{index set} is the set of indicies (i.e., G\"{o}del codes) of computable structures of some sort.  In \cite{CHKLMMQSW}, the authors characterize the complexity of detecting rank-$k$ free groups and show that it is $d-\Sigma^0_2$ complete in the class of computable free groups. They also show that determining whether an arbitrary computable group is free is $\Pi^0_4$-complete. 

In \cite{CHKM2006}, in a similar vein, the authors determine the complexity of the set of all indices for computable isomorphic copies of a given structure (finite structures, vector spaces, Archimedean real closed ordered fields, and certain $p$-groups).  Their techniques utilize structure-characterizing Scott formulas in infinitary logic.

We were originally motivated by the question of identifying orderability properties of groups from simple (i.e., finite) descriptions, like an algorithm for an atomic diagram or for enumerating a presentation. While we do discuss our results in that context in Sections \ref{po_sec} and \ref{bo_sec}, we begin with some more general results, their immediate corollaries, and completeness results for other natural abstract properties of groups.

\section{Definitions}

A group $G$ is said to be \textit{finitely presented} if it is described by finitely many generators and relators, $\langle x_0, x_1, \ldots, x_n ~|~ R_0, R_1, \ldots, R_k\rangle$.  A group is \textit{recursively presented} (we will write \textit{r.p.}) when it is described by a computable set of generators and there is an algorithm for enumerating the (possibly infinite number of) relators (see \cite{LyndonSchupp}).  It is not hard to show that if the set of relators is recursively enumerable, then it is possible to obtain (in a uniform way) a recursive presentation on the same set of generators.

\medskip 

For purposes of characterizing complexity, we will use the following framework (note that this definition is equivalent to that given in \cite{CHKLMMQSW}).

\begin{definition}\label{complete}Let $\Gamma$ be a complexity class in the arithmetical hierarchy and $A$ an index set for some collection of recursive presentations of groups (or atomic diagrams of computable groups). We say \emph{detecting property $P$ is $\Gamma$-complete in $A$} if the following hold. 

\begin{enumerate}

\item There is a $\Gamma$ formula $\phi(e)$ so that $B= A\cap \{e\in \nats~|~ \phi(e)\}$ is exactly the set of indices of recursive presentations of groups (or atomic diagrams of computable groups) which exhibit property $P$.

\item For any $\Gamma$ set $S$, there is a computable function $f:\nats\to A$ so that $e\in S$ if and only if $e\in B$. 

\end{enumerate}

Whenever \emph{(1)} holds, we say detecting \emph{$P$ is $\Gamma$ in $A$}, and when \emph{(2)} holds, we say detecting \emph{$P$ is $\Gamma$-hard in $A$}.\end{definition}  

\noindent For example, let $A$ be the set of all indices of computable groups, and $P$ the property ``abelian''.  This property is described by the $\Pi^0_1$ formula $\forall x,y ~ xy=yx$, so the corresponding detection problem is $\Pi^0_1$ in the class of computable groups (later we will see that it is $\Pi^0_1$-complete).

\medskip

\medskip


\begin{definition}\label{markov_def}

An property $P$ of groups is \emph{Markov for a class} $\mathcal C$ \emph{of groups} if there is a group $G_+\in\mathcal C$ which exhibits the property, and there is a group, $G_- \in \mathcal C$,  so that for any group $H$, if there is a injective homomorphism from $G_-$ into $H$, $H$ fails to have property $P$.

\end{definition}

Rabin's Theorem (1.1 in \cite{Rabin1958}) asserts that Markov properties of finitely presented groups are not recursively recognizable in that class. It follows from the works of Collins and Lockhart(\cite{Collins1970}, \cite{Lockhart1981}) that the same holds true in the class of computable groups.  A look at the respective proofs reveals that Rabin showed $\Sigma^0_1$-hardness of Markov properties for finitely presented groups, and Lockhart showed these properties are $\Pi^0_1$-hard to detect in computable groups. 

The table below summarizes most of the results in this article.

\begin{center}

\renewcommand{\arraystretch}{1.4}

\begin{tabular}{|l|c|c|}

\hline
\textbf{Property} & \textbf{Class of r.p. groups} & \textbf{Class of computable groups} \\
\hline
Markov property & $\Pi^0_2$-hard & $\Pi^0_1$-hard (given an infinite $G_+$) \\
\hline 
Abelian & $\Pi^0_2$-complete & $\Pi^0_1$-complete \\
\hline
torsion-free & $\Pi^0_2$-complete & $\Pi^0_1$-complete \\
\hline
trivial & $\Pi^0_2$-complete & n/a \\
\hline
divisible & $\Pi^0_2$-complete & $\Pi^0_2$-complete \\
\hline
torsion & $\Pi^0_2$-complete & $\Pi^0_2$-complete \\
\hline
totally left-orderable & $\Pi^0_2$-complete & $\Pi^0_1$-complete \\
\hline
totally bi-orderable & $\Pi^0_2$-complete & $\Pi^0_1$-complete \\
\hline
finite & $\Sigma^0_3$-complete & n/a \\
\hline
decidable word problem & $\Sigma^0_3$-complete & n/a \\
\hline
cyclic & $\Sigma^0_3$-complete & in $\Sigma^0_3$ \\
\hline
nilpotent & $\Sigma^0_3$-complete & $\Sigma^0_2$-complete \\
\hline
solvable & $\Sigma^0_3$-complete & $\Sigma^0_2$-complete \\
\hline
finitely presentable & $\Sigma^0_3$-complete & in $\Sigma^0_4$ \\
\hline
\end{tabular}
\end{center}

\medskip

We use standard computability-theoretic notation throughout (as in \cite{Soare}), denoting the $e$th partially computable function on the natural numbers in some fixed, acceptable enumeration Turing machines by $\varphi_e$, and its domain by $W_e$.  $W_{e,s}$ is the $s$th finite approximation of $W_e$, and we assume throughout that the cardinality of $W_{e,s+1}-W_{e,s}$ is at most one.  A group is computable if its atomic diagram is computed by some $\varphi_e$.

\section{Decision problems in recursively presented groups}

\subsection{A general theorem}

 We begin by considering detection of Markov properties in the class of recursively presented groups, which contains both finitely presented groups and all computable groups.

In what follows, we will conflate the presentation of a group with the group itself on occasion.

\begin{theorem}\label{markov}
Let $P$ be a Markov property for r.p.~groups. Detection of $P$ is $\Pi^0_2$-hard in the class of r.p.~groups.
\end{theorem}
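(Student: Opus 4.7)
The plan is to prove $\Pi^0_2$-hardness by giving a computable many-one reduction from the $\Pi^0_2$-complete set $\INF=\{e:W_e\text{ is infinite}\}$ to detection of $P$. Given $e$, I will uniformly build a recursive presentation of a group $H_e$ such that $H_e$ has $P$ if and only if $W_e$ is infinite.

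Fix r.p.\ presentations $\bra X_+\mid R_+\ket$ of $G_+$ and $\bra X_-\mid R_-\ket$ of $G_-$, the Markov witnesses. For each $n\in\nats$ take a disjoint renamed copy $X_-^{(n)}$ of $X_-$ and the corresponding renamed copy $R_-^{(n)}$ of $R_-$. The generators of $H_e$ are $X_+\cup\bigcup_n X_-^{(n)}$, and its relators are $R_+\cup\bigcup_n R_-^{(n)}$ together with the following \emph{killing relators} added stage by stage: at stage $s$, for every $n$ with $|W_{e,s}|>n$ and every generator $a\in X_-^{(n)}$ enumerated by stage $s$, add the relator $a=1$. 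Since the generator set, each $R_-^{(n)}$, and $W_e$ are uniformly c.e.\ in $e$, this yields an r.p.\ presentation of $H_e$ whose index is computable from $e$.

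Write $K=G_+*G_-^{(0)}*G_-^{(1)}*\cdots$ for the free product of the factors before any killing relators are imposed; then $H_e$ is $K$ modulo the normal closure of the killing relators. Suppose first that $W_e$ is infinite. For each fixed $n$ the inequality $|W_{e,s}|>n$ eventually holds, so from that stage on every enumerated generator of $X_-^{(n)}$ is killed; in the limit every element of $X_-^{(n)}$ is identified with $1$. Hence $G_-^{(n)}$ is trivial in $H_e$ for every $n$, giving $H_e\cong G_+$, which exhibits $P$. Suppose instead that $W_e$ is finite with $|W_e|=k$. Then no killing relator is ever added inside $X_-^{(n)}$ for $n\geq k$, so
\[
H_e \;\cong\; G_+ * G_-^{(k)} * G_-^{(k+1)} * \cdots,
\]
in which $G_-^{(k)}\cong G_-$ embeds as a free factor; by the Markov hypothesis, $H_e$ then fails $P$.

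The main technical care is twofold: (i) making sure the construction really is r.p.\ when $X_-$ may be infinite, which amounts to using a standard computable enumeration of $X_-\times\nats$ for the generators and dovetailing the enumerations of the $R_-^{(n)}$; and (ii) justifying that quotienting $K$ by the normal closure of a set of generators belonging to some of its free factors simply collapses those factors and leaves the remaining ones untouched, so that the surviving $G_-^{(k)}$ genuinely embeds. The latter is immediate from the universal property of free products via the homomorphism sending the killed generators to $1$ and all other generators to themselves, whose kernel is exactly the normal closure in question.
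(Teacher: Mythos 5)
Your proposal is correct and is essentially the paper's own argument: both reduce $\INF$ to detection of $P$ by forming the free product of $G_+$ with infinitely many disjoint copies of $G_-$ and killing successive copies (by enumerating their generators as relators) as elements appear in $W_e$, so that the limit group is $G_+$ when $W_e$ is infinite and still contains a surviving free factor isomorphic to $G_-$ when $W_e$ is finite. Your added remarks on dovetailing when $X_-$ is infinite and on the universal-property justification of the collapse are fine but only make explicit what the paper leaves implicit.
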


\begin{proof}

We reduce the set of indices of the infinite c.e.~sets, $\INF=\{e~|~|W_e|=\omega\}$ to the detection of $P$.

Let $$G_+ = \langle x_0, x_1, \ldots ~|~ R_0, R_1, \ldots\rangle=\langle \mathbf{x}~|~\mathbf{R(x)}\rangle$$ and $$G_- = \langle y_0, y_1, \ldots ~|~ S_0, S_1, \ldots \rangle = \langle \mathbf{y}~|~\mathbf{S(y)}\rangle$$ witness that $P$ is Markov for r.p.~groups as in Definition \ref{markov_def} above.  For each $e\in \nats$, we give a recursive presentation of a group $G_e$ so that $G_e$ has property $P$ if and only of $e\in \INF$.  

We will need to include multiple copies of the presentation of $G_-$ on different sets of generators, and so will write  $$G_-(\mathbf{y_i}) = \langle y_{i,0}, y_{i,1}, \ldots ~|~ S_0, S_1, \ldots \rangle = \langle \mathbf{y_i}~|~\mathbf{S(y_i)}\rangle,$$ that we may specify distinct generating sets.  Let $A\ast B$ denote the free product of groups $A$ and $B$.

~

\noindent \emph{Construction.}

\noindent\emph{Stage 0.}  Initialize by setting $$G_{e,0} = G_+\ast G_-(\mathbf{y_0}) \ast G_-(\mathbf{y_1}) \ast \cdots  = \langle \mathbf{x}, \mathbf{y_0}, \mathbf{y_1}, \ldots ~|~ \mathbf{R(x)}, \mathbf{S(y_0)}, \mathbf{S(y_1)}, \ldots \rangle,$$ and $n=0$.

\medskip

\noindent\emph{Stage $s+1$.}  The stage begins with the presentation $G_{e,s} = G_{e,0}$ if $n=0$ or, if $n>0$, the presentation $$G_{e,s} = \langle \mathbf{x}, \mathbf{y_0}, \mathbf{y_{1}}, \ldots ~|~ \mathbf{y_0}, \ldots, \mathbf{y_{n-1}}, \mathbf{R(x)}, \mathbf{S(y_0)}, \mathbf{S(y_1)}, \ldots\rangle.$$

If $W_{e,s+1}-W_{e,s}$ is empty, set $G_{e,s+1}=G_{e,s}$.  Otherwise, set $$G_{e,s+1} = \langle \mathbf{x}, \mathbf{y_0}, \mathbf{y_{1}}, \ldots~|~ \mathbf{y_0}, \ldots, \mathbf{y_{n}}, \mathbf{R(x)}, \mathbf{S(y_0)}, \mathbf{S(y_1)}, \ldots \rangle,$$ and increment $n$.

Let $G_e$ be the limit $\lim_s G_{e,s}$.

\noindent \emph{End of construction.}

\medskip

It is easy to see that $G_e$ is a r.p.~group.  If $W_e$ is of finite cardinality $n$, then $G_e$ is the group with presentation $$G_{e} = \langle \mathbf{x}, \mathbf{y_0}, \mathbf{y_{1}}, \ldots ~|~ \mathbf{y_0}, \ldots, \mathbf{y_{n-1}}, \mathbf{R(x)}, \mathbf{S(y_0)}, \mathbf{S(y_1)}, \ldots \rangle,$$ which is isomorphic to $G_+\ast G_-\ast G_- \ast \cdots $, so contains $G_-$ as a subgroup and therefore does not have property $P$.  

If $W_e$ is infinite, the presentation that results from the construction is $$G_{e} = \langle \mathbf{x}, \mathbf{y_0}, \mathbf{y_1}, \ldots ~|~ \mathbf{y_0}, \mathbf{y_1}, \ldots, \mathbf{R(x)}, \mathbf{S(y_0)}, \mathbf{S(y_1)}, \ldots \rangle,$$ which is isomorphic to $G_+$ and so does have property $P$.

\end{proof}

It follows that detection of any Markov property that can be characterized by a finitary $\Pi^0_2$ formula, or by a computable infintary $\Pi_2$ formula, is a $\Pi^0_2$-complete decision problem (see  \cite{AshKnight}, especially Theorem 7.5, for more on computable infintary formulas). Some examples of such properties are included in the corollary below.

It should be noted that in a r.p.~group, equality is not generally a computable predicate, but it is $\Sigma^0_1$ (and inequality is consequently $\Pi^0_1$).  Throughout, we will write $=_G$ to denote equality in the group $G$, $F_G$ for the free group on its generators, and $1_G$ for its identity. 

It is easy to see that words in $F_G$ which evaluate to the identity in group $G$ can be algorithmically enumerated.  So, when we write ``$w=_G v$'' for some $w,v\in F_G$, we mean that ``$\exists s\in ~ \nats~ wv^{-1} \in 1_{G,s}$'', where $1_{G,s}$ is the $s$th finite approximation of the set of words in $F_G$ which evaluate to the identity in $G$.

\begin{cor}\label{cor_rpm}
Detection problems for the following properties are $\Pi^0_2$-complete in the class of recursively presented groups.

\begin{itemize}

\item Being abelian.
\item Being torsion-free.
\item Being trivial.
\item Being divisible.
\item Being a torsion group (in which all elements have finite order).
\item Being totally left- or bi-orderable (see Sections \ref{po_sec} and \ref{bo_sec} below).

\end{itemize}

\end{cor}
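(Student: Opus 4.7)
The plan is, for each property in the list, to match a $\Pi^0_2$ upper bound with a $\Pi^0_2$ lower bound, the latter coming from Theorem \ref{markov} whenever the property is Markov. For the upper bounds I would write down natural defining formulas and count quantifiers, using that $=_G$ is $\Sigma^0_1$ (as noted just above) and $\neq_G$ is accordingly $\Pi^0_1$ in a r.p.~group. For instance, abelian reads $\forall w_1, w_2\,(w_1 w_2 =_G w_2 w_1)$; torsion-free reads $\forall w\,\forall n\geq 1\,(w =_G 1 \vee w^n \neq_G 1)$; trivial reads $\forall w\,(w =_G 1)$; torsion reads $\forall w\,\exists n\geq 1\,(w^n =_G 1)$; and divisible reads $\forall w\,\forall n\geq 1\,\exists v\,(v^n =_G w)$. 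Each is $\Pi^0_2$ after routine unpacking. The $\Pi^0_2$ upper bounds for total left- and bi-orderability I would defer to the characterizations developed in Sections \ref{po_sec} and \ref{bo_sec}.

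For the hardness of the remaining properties I would apply Theorem \ref{markov} once Markov witnesses $G_+, G_-$ are exhibited. The natural choices are: $G_+ = \mathbb{Z}$ with $G_- = F_2$ for abelian; $G_+ = \mathbb{Z}$ with $G_- = \mathbb{Z}/2\mathbb{Z}$ for torsion-free and, since any left-orderable group is torsion-free, also for total left- and bi-orderability; $G_+$ the trivial group with $G_- = \mathbb{Z}$ for trivial; and $G_+ = \mathbb{Z}/2\mathbb{Z}$ with $G_- = \mathbb{Z}$ for torsion. In each case the obstructing feature (a non-commuting pair, a torsion element, a nontrivial element, or an infinite-order element) is preserved under any injective embedding, so the Markov condition is met and Theorem \ref{markov} gives the $\Pi^0_2$-hardness, matching the upper bound.

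The main obstacle is divisibility, for which I see no obvious Markov witness: $\mathbb{Z}$ embeds into the divisible group $\mathbb{Q}$, each $\mathbb{Z}/p\mathbb{Z}$ embeds into the divisible quasi-cyclic group $\mathbb{Z}(p^\infty)$, and all the usual finitely generated candidates for $G_-$ admit divisible overgroups (heuristically, one can always adjoin roots by iterated HNN extensions). I would therefore not appeal to Theorem \ref{markov} directly for divisibility, but instead adapt the construction in its proof. Starting from a r.p.~divisible group such as $\mathbb{Q}$, I would, at every stage at which no new element enters $W_e$, introduce fresh generators together with relations that plant a persistent element lacking an $n$-th root for some $n$, free-producted into the overall presentation. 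Arranging this so that these obstructions are all killed precisely when $W_e$ is infinite, while the presentation remains recursively enumerable uniformly in $e$, is the delicate part, and gives a reduction from $\INF$ as in Theorem \ref{markov}.
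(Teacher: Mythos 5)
For abelian, torsion-free, trivial, torsion, and the two orderability items your proposal coincides with the paper's proof: the same $\Pi^0_2$ defining formulas (using that $=_G$ is $\Sigma^0_1$ and $\neq_G$ is $\Pi^0_1$), the same Markov witnesses ($\ints$ with $F_2$; $\ints$ with $\ints/2\ints$; the trivial group with $\ints$; $\ints/2\ints$ with $\ints$; $\ints$ with $\ints/2\ints$ via torsion-freeness of orderable groups), and the same deferral of the orderability upper bounds to Sections \ref{po_sec} and \ref{bo_sec}.

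The one divergence is divisibility, and there your diagnosis is actually sharper than the paper's text: since every group embeds in a divisible group, divisibility admits no Markov witness $G_-$, so Theorem \ref{markov} cannot be invoked as a black box (the paper nominally does so with $G_+=\mathbb{Z}(p^\infty)$, $G_-=\ints$). However, your proposed repair is left genuinely incomplete --- you only sketch planting and killing root-obstructions and concede the delicate part --- and the scheme as described (adding obstructions at stages where nothing enters $W_e$, killing them when $W_e$ is infinite) needs care, e.g.\ when $W_e$ is infinite and an element enters at every stage. The gap closes much more cheaply: run the construction of Theorem \ref{markov} verbatim with $G_+=\mathbb{Z}(p^\infty)$ (or any r.p.\ copy of $\mathbb{Q}$) and $G_-=\ints$, and observe that its proof only uses two facts about the outcomes: when $e\in\INF$ the group is $G_+$, which is divisible, and when $e\notin\INF$ the group is the nontrivial free product $G_+\ast\ints\ast\ints\ast\cdots$, which fails divisibility directly --- by the normal form (or conjugacy) theorem for free products, any root of a nontrivial element of a free factor lies in that same factor, so a generator of a $\ints$ factor has no $p$-th root. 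Thus the full Markov hypothesis is never needed for this item; only the failure of the property in the infinite-free-product outcome is, and that you can verify outright rather than building a new reduction.
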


\begin{proof}

Abelian groups are precisely those $G$ which satisfy the formula $$\forall w \in F_G~ \forall v \in F_G~ wv =_G vw,$$ which is $\Pi^0_2$ as $=_G$ is a $\Sigma^0_1$ predicate. Set $G_+ = \langle x ~|~ \rangle$ and $G_- = \langle x, y ~|~ \rangle$ and apply the theorem. 

\medskip

Torsion-free groups are characterized by the formula $$\forall w\in F_G~ \forall n \in\nats~  (w=_G 1_G\vee w^n \neq_G 1_G),$$ which is $\Pi^0_2$ (again due to the fact that equality is enumerable).  Set $G_+ = \langle x~|~\rangle$ and $G_- = \langle y ~|~ y^2 \rangle$ and apply the theorem.

\medskip
For triviality, the characterizing formula is $\forall w\in F_G ~ w=_G 1_G$, which is $\Pi^0_2$.  Let $G_+ = \langle	 x~|~ x \rangle$ and $G_- = \langle y ~|~ \rangle$ and apply the theorem.

\medskip

Group $G$ is divisible if and only if $$\forall w \in F_G~ \forall n \in \nats^{>0}~ \exists  v \in F_G~ ( w =_G1_G \vee v^n =_G w),$$ which is again $\Pi^0_2$. Set $G_+ = \langle x_1, x_2, \ldots ~|~ x^p_1 = 1, x^p_2 = x_1, x^p_3 = x_2, \ldots \rangle$, the Pr{\"u}fer group, $\mathbb{Z}(p^\infty)$, for some prime $p$, and $G_ - = \langle x ~|~ \rangle$ and apply the theorem. 

\medskip

Torsion groups are characterized by the formula $$\forall w\in F_G~ \exists n\in\nats~ (w^n=_G1_G),$$ which is $\Pi^0_2$.  Set $G_+=\langle x ~|~ x^2\rangle$ and $G_- = \langle y~|~\rangle$ and apply the theorem.

\medskip

The characterization of orderability appears in Section \ref{po_sec} below.

\end{proof}

\subsection{Completeness at higher levels of the arithmetical hierarchy}

\begin{theorem}

Detecting finiteness is $\Sigma^0_3$-complete in r.p.~groups.

\end{theorem}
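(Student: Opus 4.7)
The plan is to establish the upper bound $\Sigma^0_3$ for finiteness and then prove $\Sigma^0_3$-hardness by reducing $\COF = \{e \mid \overline{W_e}\text{ is finite}\}$, a standard $\Sigma^0_3$-complete set, to the set of indices of finite r.p.~groups.

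For the upper bound, I would observe that a r.p.~group $G$ is finite if and only if
\[
\exists n \, \exists w_1,\ldots,w_n \in F_G \, \forall v \in F_G \, \exists i \le n \; (v =_G w_i).
\]
Since $=_G$ is a $\Sigma^0_1$ predicate in a r.p.~group, the matrix is $\Sigma^0_1$, and the prefix $\exists\forall\exists$ places the whole formula in $\Sigma^0_3$. The witnesses $w_1,\ldots,w_n$ can be coded as a single natural number, so the outer existential is a single $\exists$.

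For the hardness direction, I would give a uniformly computable function $e \mapsto G_e$ producing r.p.~presentations such that $G_e$ is finite iff $e \in \COF$. For each $e$, let
\[
G_e = \langle x_0, x_1, \ldots \mid x_i^2 \; (\text{all } i),\; [x_i, x_j]\; (\text{all } i,j),\; x_n \text{ for each } n \in W_e \rangle.
\]
The set of relators is c.e.~uniformly in $e$ (the first two families are outright computable, and the third is enumerated using $W_e$), so we obtain an index for a r.p.~presentation of $G_e$ uniformly in $e$. Adding the relators $x_i^2$ and $[x_i, x_j]$ makes $G_e$ an elementary abelian $2$-group on the generators $\{x_n : n \notin W_e\}$, so $G_e \cong \bigoplus_{n \notin W_e} \ints/2\ints$. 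Thus $|G_e| = 2^{|\overline{W_e}|}$, which is finite precisely when $\overline{W_e}$ is finite, i.e., when $e \in \COF$.

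There is no real obstacle in this argument; the only thing to verify carefully is that the enumeration of relators is uniformly c.e.~in $e$, which holds by composing a standard enumeration of $W_e$ with the fixed enumerations of the $x_i^2$ and $[x_i,x_j]$ relators. Combining the upper bound with the reduction from the $\Sigma^0_3$-complete set $\COF$ yields $\Sigma^0_3$-completeness.
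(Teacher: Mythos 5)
Your proposal is correct and follows essentially the same route as the paper: the same $\exists\forall\exists$ characterization of finiteness with $=_G$ a $\Sigma^0_1$ predicate, and the same reduction of $\COF$ via the elementary abelian $2$-group $\langle x_0, x_1, \ldots \mid x_i^2, [x_i,x_j], \text{ and } x_k \text{ for } k \in W_e\rangle$, which is $\ints_2^{|\overline{W_e}|}$ when $e\in\COF$ and $\ints_2^{\omega}$ otherwise. No gaps to report.
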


\begin{proof}

A $\Sigma^0_3$ formula characterizing finiteness is $$\exists n\in\nats ~\exists \{w_1, \ldots, w_n\}\in (F_G)^n~ \forall v\in F_G~ v\in_G \{w_1, \ldots, w_n\},$$ where ``$\in_G$'' abbreviates the $\Sigma^0_1$ formula saying that $v$ is equal (in $G$) to one of the $w_i$'s.    

To show completeness, we reduce $\COF=\{ e\in \nats ~|~ |\overline{W_e}|<\omega\}$ to the detection problem.  For all $e\in\nats$, set
$$G_e = \langle x_0, x_1, \ldots ~|~ [x_i, x_j], x_i^2, \textrm{ for all } i,j\in \nats,\textrm{ and } x_k \textrm{ for all }k\in W_e\rangle,$$ where $[x,y]$ denotes the commutator of group elements $x$ and $y$.  Now, if $W_e$ is cofinite with $|\overline{W_e}| = n$, we have $G_e\cong \ints_2^n$.  If it is not, $G_e\cong \ints_2^\omega$.

\end{proof}

\begin{theorem}

Detecting a group with a decidable word problem is $\Sigma^0_3$-complete in r.p.~groups

\end{theorem}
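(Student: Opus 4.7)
The plan is to establish a $\Sigma^0_3$ upper bound and $\Sigma^0_3$-hardness separately, using the classical $\Sigma^0_3$-complete index set $\REC = \{e~|~W_e\text{ is recursive}\}$ to witness hardness.

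For the upper bound, recall that in any r.p.~group the word problem is uniformly c.e., so $w=_{G_e} 1_{G_e}$ is $\Sigma^0_1$.  Then $G_e$ has decidable word problem iff the complement of its word problem is also c.e., i.e., iff $\exists i$ with $W_i = \{w\in F_{G_e}~|~w\neq_{G_e} 1_{G_e}\}$.  This equality splits into the disjointness condition $\forall w\,\neg(w\in W_i \wedge w=_{G_e} 1_{G_e})$, which is $\Pi^0_1$, and the covering condition $\forall w\,(w\in W_i \vee w=_{G_e}1_{G_e})$, which is $\Pi^0_2$.  Their conjunction is $\Pi^0_2$, so the overall statement is $\Sigma^0_3$.

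For hardness, given $e$, I would define
$$G_e = \bra x_0, x_1, \ldots~|~[x_i,x_j]\text{ for all }i,j,\; x_k\text{ for }k\in W_e \ket,$$
a uniformly r.p.~presentation.  The commutator relators force $G_e$ to be abelian, and explicitly killing $x_k$ for $k\in W_e$ yields $G_e \cong \bigoplus_{k\notin W_e} \ints$.  Indeed, for each fixed $k \notin W_e$ the assignment $x_k \mapsto 1$, $x_j\mapsto 0$ for $j\neq k$ extends to a well-defined homomorphism $G_e \to \ints$ witnessing $x_k \neq_{G_e} 1_{G_e}$, while $x_k =_{G_e} 1_{G_e}$ holds by construction when $k\in W_e$.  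Consequently, if $W_e$ is recursive, the word problem of $G_e$ is decidable (reduce a word to abelian normal form $\prod x_k^{a_k}$, then test each $k$ with $a_k\neq 0$ for membership in $W_e$); conversely, a decision procedure for the word problem of $G_e$ decides $W_e$ via the query ``$x_k =_{G_e} 1_{G_e}$?''.  Thus $e\mapsto G_e$ is the desired computable reduction of $\REC$ to the detection problem.

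No serious obstacle is expected: the presentation is transparently uniform, and the whole argument hinges only on the standard equivalence between decidability of a c.e.~set and c.e.-ness of its complement, together with the classical $\Sigma^0_3$-completeness of $\REC$.
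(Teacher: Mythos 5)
Your argument is correct, and its overall skeleton matches the paper's: give a $\Sigma^0_3$ definition of ``has decidable word problem,'' then reduce the $\Sigma^0_3$-complete set $\REC$ to the detection problem. The differences are in the details and are worth noting. For the upper bound, the paper quantifies over an index $e$ of a machine $\varphi_e$ that \emph{decides} the word problem, while you quantify over a c.e.\ index $i$ with $W_i$ equal to the complement of the word problem and invoke Post's theorem; these are equivalent formulations and both land in $\Sigma^0_3$ (your count $\Pi^0_1 \wedge \Pi^0_2$ under one existential quantifier is right). For hardness, the paper uses the classical finitely generated group $G_e=\bra a,b,c,d ~|~ a^nba^n=c^ndc^n,\ n\in W_e\ket$, whose word problem is Turing equivalent to $W_e$; verifying that equivalence requires facts about free products with amalgamation (Britton's lemma--type normal forms), which the paper leaves implicit. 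Your construction $\bra x_0,x_1,\ldots ~|~ [x_i,x_j],\ x_k \text{ for } k\in W_e\ket \cong \bigoplus_{k\notin W_e}\ints$ is more elementary: the verification is a transparent computation with abelian normal forms plus the retraction onto $\ints$ showing $x_k\neq_{G_e}1_{G_e}$ for $k\notin W_e$. The trade-off is that your group is infinitely generated, so unlike the paper's reduction it does not establish hardness within the finitely generated r.p.\ groups. Both constructions produce only a c.e.\ set of relators and so both implicitly rely on the paper's earlier remark that a c.e.\ relator set can be uniformly converted into a recursive presentation; it would be worth one sentence in your write-up acknowledging this to justify that $e\mapsto G_e$ lands in the index set of recursive presentations.
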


\begin{proof}
Let $G=\langle x_1, x_2, \ldots ~|~ R_1, R_2, \ldots\rangle$ be a r.p.~group.

The property \emph{``has a decidable word problem''} is characterized by the $\Sigma^0_3$ formula $$\exists e \in \nats~ \forall w \in F_G~ \exists s \in \nats~ (\varphi_{e,s}(w) \downarrow \wedge~ (\varphi_{e,s}(w) = 1 \leftrightarrow w =_G 1_G)).$$   Accounting for enumerability of equality and rewriting in prenex normal form yields an equivalent formula more easily seen to be $\Sigma^0_3$,

$$\exists e\in \nats ~ \forall w \in F_G ~ \forall t_2 \in \nats ~\exists t_1\in\nats ~\exists s\in \nats $$ $$\left[(\varphi_{e,s}(w)\downarrow) \wedge ~ \left(\varphi_{e,s}(w)\neq 1 \vee  w\in 1_{G,t_1}\right) \wedge \left(\varphi_{e,s}(w)= 1 \vee  w\not\in 1_{G,t_2}\right) \right].$$

For completeness, consider $G_e = \langle a, b, c, d ~|~ a^n b a^n =_G c^n d c^n, n \in W_e \rangle$. The group $G_e$ has a decidable word problem if and only if $e$ is in the $\Sigma^0_3$-complete set $\REC = \{ e \in \nats ~|~ W_e$ is recursive$\}$.



\end{proof}

\begin{theorem} 
Detecting a cyclic group in the r.p.~groups is $\Sigma^0_3$-complete. 
\end{theorem}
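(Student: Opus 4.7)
The plan is to establish both the $\Sigma^0_3$ upper bound and $\Sigma^0_3$-hardness for detecting cyclicity among r.p.~groups. For the upper bound, I would use the characterization that $G$ is cyclic iff
$$\exists w \in F_G \; \forall v \in F_G \; \exists n \in \ints \; v =_G w^n.$$
Since $=_G$ is $\Sigma^0_1$, as noted in the remarks preceding Corollary~\ref{cor_rpm}, this is a $\Sigma^0_3$ condition; allowing $n$ to range over $\ints$ rather than $\nats$ is harmless since $w^{-n} = (w^{-1})^n$ is computable in $n$ and $w$.

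For $\Sigma^0_3$-hardness, my plan is to reduce from $\COF = \{e \in \nats : |\overline{W_e}| < \omega\}$, which is $\Sigma^0_3$-complete. Fix an effective enumeration $p_1 < p_2 < \cdots$ of the primes. For each index $e$, define
$$G_e = \langle x_1, x_2, \ldots ~|~ x_i^{p_i},\ [x_i, x_j] \text{ for all } i, j \in \nats,\ \text{and } x_k \text{ for all } k \in W_e \rangle.$$
This is a valid r.p.~presentation since the last family of relators is enumerated in tandem with $W_e$. Algebraically $G_e$ is the abelian group $\bigoplus_{i \in \overline{W_e}} \ints_{p_i}$.

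The final step, and essentially the only substantive one, is the algebraic observation that a direct sum $\bigoplus_{i \in S} \ints_{p_i}$ of cyclic groups of pairwise distinct prime orders is cyclic if and only if $S$ is finite. If $S$ is finite, the Chinese Remainder Theorem identifies the sum with $\ints_N$ for $N = \prod_{i \in S} p_i$. If $S$ is infinite, any element $g$ has finite support $T \subseteq S$ and its order $|g|$ is divisible only by primes $p_i$ with $i \in T$; choosing any $i \in S \setminus T$, the generator of the $\ints_{p_i}$ summand has order $p_i \nmid |g|$ and hence lies outside the cyclic subgroup $\langle g \rangle$. Combining this with the construction yields $G_e$ cyclic iff $e \in \COF$.

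The main obstacle is minor --- the construction itself directly parallels the one used above for finiteness, and the only new ingredient is the finite-versus-infinite cyclicity dichotomy for direct sums of prime-order cyclic groups.
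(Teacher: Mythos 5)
Your proposal is correct and follows essentially the same route as the paper: the same $\exists w\,\forall v\,\exists n$ characterization for the $\Sigma^0_3$ upper bound, and the same reduction from $\COF$ via the presentation $\langle x_1, x_2, \ldots \mid x_i^{p_i}, [x_i,x_j], x_k \text{ for } k\in W_e\rangle$, which the paper merely writes out as a stagewise enumeration. Your explicit verification of the cyclicity dichotomy for $\bigoplus_{i\in S}\ints_{p_i}$ is a welcome detail the paper leaves to the reader.
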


\begin{proof}

The property of being a cyclic group is characterized by the $\Sigma^0_3$  formula $$\exists w \in F_G~ \forall v \in F_G~ \exists n \in \nats^{>0}~ (v =_G1_G \vee w^n =_G v).$$ For completeness we reduce $\COF$ as follows, making use of the fact that the product of cyclic groups of the form $\ints_n$ and $\ints_m$ is cyclic if and only if $n$ and $m$ are relatively prime.  Let $p_n$ be the $n$th prime number.  For each $e\in\nats$, we enumerate a presentation of $G_e$ as the limit of groups $G_{e,s}$.

\medskip

\noindent\emph{Construction.}

\noindent \textit{Stage 0.}  Initialize $G_{e,0} = \left\langle x_0, x_1, \ldots ~|~ x_0^{p_0}, x_1^{p_1}, \ldots, \textrm{ and } \forall i,j \in \nats ~[x_i, x_j]\right\rangle.$
 
\smallskip 
 
\noindent \textit{Stage $s+1$.}  If $W_{e,s+1}-W_e = \emptyset$, set $G_{e,s+1}=G_{e,s}$.  Otherwise, if $n\in W_{e,s+1}-W_e$, add $x_n$ to the relators of the presentation of $G_{e,s}$ to obtain the presentation for $G_{e,s+1}$.

\smallskip

Let $G_e = \lim_{s} G_{e,s}$.

\noindent\emph{End of construction.}

\medskip

The construction gives a recursive presentation for a group which is a finite direct sum of cyclic groups of relatively prime orders if the set $W_e$ is cofinite, and is an infinite direct sum of such groups otherwise.

\end{proof}

\begin{theorem}\label{rp_nil}
Detecting a nilpotent group is  $\Sigma^0_3$-complete in r.p.~groups (even in the class of r.p.~residually nilpotent groups).  
\end{theorem}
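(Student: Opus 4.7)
The plan is to establish the $\Sigma^0_3$ upper bound from a standard characterization of nilpotency and to prove $\Sigma^0_3$-hardness by a construction reducing $\COF$ whose outputs are always residually nilpotent. For the upper bound, a group is nilpotent iff some term of its lower central series is trivial, so writing $[w_0,w_1,\ldots,w_c]$ for the left-normed iterated commutator of length $c+1$ we get the characterization
\[
\exists c\in\nats~ \forall w_0,\ldots,w_c \in F_G~ [w_0,w_1,\ldots,w_c] =_G 1_G,
\]
which is $\Sigma^0_3$ since $=_G$ is $\Sigma^0_1$ in any r.p.\ group; this upper bound is inherited by the subclass of r.p.\ residually nilpotent groups.

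For hardness, I reduce the $\Sigma^0_3$-complete set $\COF=\{e:|\overline{W_e}|<\omega\}$. Let $N_n$ denote the free nilpotent group of class $n$ on two generators (for $n\geq 1$), which has nilpotency class exactly $n$. Initialize
\[
G_{e,0} = \left\langle x_{n,i}: n\geq 1,\, i<2 \,\middle|\, [x_{n,i}, x_{m,j}]\ (n\neq m),\ \gamma_{n+1}\text{-relators in } x_{n,0}, x_{n,1}\right\rangle,
\]
so $G_{e,0}\cong\bigoplus_{n\geq 1} N_n$. At stage $s+1$, for each $n\in W_{e,s+1}-W_{e,s}$, enumerate the new relators $x_{n,0}$ and $x_{n,1}$, collapsing the $n$-th summand to the trivial group. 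Let $G_e=\lim_s G_{e,s}$, so $G_e\cong \bigoplus_{n\in\overline{W_e}}N_n$.

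If $e\in\COF$, only finitely many summands survive, so $G_e$ is nilpotent of class $\max\{n:n\in\overline{W_e}\}$; if $e\notin\COF$, the surviving summands have unbounded class, so $G_e$ is not nilpotent. Either way, $G_e$ remains residually nilpotent: using $\gamma_c(\bigoplus_n N_n)=\bigoplus_n\gamma_c(N_n)$, any nontrivial $g\in G_e$ has a nonzero projection onto some $N_n$, and hence lies outside $\gamma_{n+1}(G_e)$. The main technical point to verify carefully is that adding $x_{n,0}, x_{n,1}$ collapses exactly the $n$-th summand without disturbing the others, which is immediate from the commuting relators built in at stage $0$; the remaining bookkeeping (that the presentation is uniformly recursive in $e$) is routine.
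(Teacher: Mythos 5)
Your proposal is correct and follows essentially the same route as the paper: the same $\Sigma^0_3$ characterization via vanishing of iterated commutators, and the same hardness reduction from $\COF$ that starts with a presentation of a direct sum of uniformly r.p.\ nilpotent groups of strictly increasing class and kills the $n$th summand when $n$ enters $W_e$, yielding a residually nilpotent group that is nilpotent iff $e\in\COF$. The only difference is cosmetic: you use free nilpotent groups of class $n$ on two generators as the building blocks, whereas the paper uses the finite wreath products $\ints_{p_n}\wr\ints_{p_n}$ of class $p_n$; both families are uniformly recursively presentable, so either choice works.
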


\begin{proof}

Recall that a group $G$ is nilpotent if it has a central series of finite length.  We consider the lower central series of $G$, that is $$G=G_0 \geq G_1 \geq \ldots \geq G_n = \{1_G\},$$ where for each $i\leq n$, $G_{i+1} = [G_i,G]$.  The finiteness of this series can be expressed as a $\Sigma^0_3$ formula as  $$\exists n\in \nats ~ \forall \vec{g} \in G^n ~ [[...[g_0,g_1],g_2]...],g_n]=_G1_G,$$ where $[x,y]$ denotes the commutator $x^{-1}y^{-1}xy$.

For completeness we build a presentation for a group $G_e$ in stages so that $W_e$ is cofinite if and only if $G_e$ is nilpotent. 

Let $\{H_n\}_{n>0}$ be a sequence of uniformly r.p.~nilpotent groups with strictly increasing nilpotency class,  $H_n$ is has nilpotency class $n$.  For example, we can set $H_n = \ints_{p_{n}} \wr \ints_{p_{n}}$, where $\wr$ denotes the wreath product,\footnote{For groups $G$ and $H$ and the left group action $\rho$ of $H$ on itself, the \emph{regular wreath product} of $G$ by $H$ is the semidirect product $G^H \rtimes H$ where $G^H$ is the direct sum of $|H|$-many copies of $G$. The \emph{regular wreath product} is denoted $G \wr H$. } and $p_{n}$ is the $n$th prime
number.  It is well known that for any prime $p$ the regular wreath product $\mathbb{Z}_p \wr \mathbb{Z}_p$ is isomorphic to the Sylow $p$-subgroup of the symmetric group $\Sym(p^2)$ and has nilpotency class $p$. Moreover, as these groups are finite, they have finite presentations.

For each $n$, we take a finite presentation for $H_n=\langle a_{n,1}, \ldots, a_{n,k_n} ~|~ R_{n,1}, \ldots, R_{n,j_n}\rangle$.

~

\noindent\emph{Construction}.

\noindent\emph{Stage 0.}  Begin with $G_{e,0}$ as the direct sum, $\bigoplus_{n\in\omega} H_n$ given by presentation $$\langle a_{m,k}\textrm{ for } m\in \nats, k\leq k_m ~|~ R_{m,j}\textrm{ for } m\in \nats,  j\leq j_m,\textrm{ and } [a_{n,k}, a_{m,j}] \textrm{ for } n\neq m\rangle .$$ 

\smallskip

\noindent \emph{Stage $s+1$.}  When $n \in W_{e,s+1} - W_{e,s}$, enumerate the generators of $H_n$ into the relators of $G_{e,s}$ to obtain $G_{e,s+1}$. 

If $W_{e,s+1} - W_{e,s}=\emptyset$, no action is required.

Let $G_e=\lim_{s} G_{e,s}$.

\noindent\emph{End of construction.}

\medskip

This construction yields a recursive presentation of group $G_e$, which is the direct sum of finitely many nilpotent groups, and thus itself nilpotent, provided that $e \in \COF$. If $e\not\in \COF$, then $G_e$ is residually nilpotent but not nilpotent as it contains subgroups of arbitrarily large nilpotency class.  

\end{proof}

\begin{cor}Determining whether a r.p.~group is finitely presentable is $\Sigma^0_3$-complete. 

\end{cor}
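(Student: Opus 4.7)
The plan is to establish the upper bound by a direct complexity analysis of the definition of finite presentability, and to obtain $\Sigma^0_3$-hardness by reusing the family of groups constructed in the proof of Theorem \ref{rp_nil}.

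For the upper bound, a r.p.~group $G = \langle \mathbf{x}~|~\mathbf{R(x)}\rangle$ is finitely presentable exactly when there exist finitely many words $y_1,\dots,y_n\in F_G$ and finitely many words $s_1,\dots,s_m$ over $y_1,\dots,y_n$ such that the obvious map from $\langle y_1,\dots,y_n~|~s_1,\dots,s_m\rangle$ to $G$ is an isomorphism. I would decompose this into three clauses: (i) each $s_j =_G 1_G$, which is $\Sigma^0_1$; (ii) for every original generator $x_i$ there exists a word $w$ over the $y_j$'s with $x_i =_G w$, which is $\Pi^0_2$ after quantifying over $i$; and (iii) every word $w$ in $F(y_1,\dots,y_n)$ with $w =_G 1_G$ lies in the normal closure of $\{s_1,\dots,s_m\}$ in $F(y_1,\dots,y_n)$. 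Since both "$w =_G 1_G$" and "$w$ lies in that normal closure" are $\Sigma^0_1$, the implication in (iii) is $\Pi^0_2$, and the leading $\forall w$ preserves that complexity. The outer existential quantifier over the finite data $n$, $m$, the $y_j$'s, and the $s_j$'s then yields a $\Sigma^0_3$ description.

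For hardness, I would reuse the family $\{G_e\}$ from the proof of Theorem \ref{rp_nil}, where $G_e \cong \bigoplus_{n\notin W_e} H_n$ with $H_n = \ints_{p_n}\wr \ints_{p_n}$. If $e\in\COF$, only finitely many $H_n$ survive, so $G_e$ is a finite direct sum of finite groups, hence a finite group and trivially finitely presented. If $e\notin\COF$, infinitely many $H_n$ survive. Each $H_n$ has an abelianization with $p_n$ dividing its order, so the abelianization of $G_e$ is an infinite direct sum of nontrivial finite abelian groups of pairwise coprime orders. The torsion subgroup of a finitely generated abelian group is finite, so this abelianization is not finitely generated; consequently $G_e$ itself is not finitely generated, hence not finitely presentable. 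Since $\COF$ is $\Sigma^0_3$-complete, this completes the reduction.

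The main technical care needed is in the complexity bookkeeping for the injectivity clause of the upper bound, which rests on the observations that the word problem of $G$ is $\Sigma^0_1$ and that membership in the normal closure of a fixed finite set in a free group is also $\Sigma^0_1$; the rest of the argument is essentially a repurposing of the construction from Theorem \ref{rp_nil}.
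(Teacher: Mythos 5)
Your proposal is correct and takes essentially the same approach as the paper: the same $\Sigma^0_3$ upper bound (existentially quantify a finite tuple of generating words and a finite tuple of relators, then express surjectivity and ``kernel equals normal closure'' as $\Pi^0_2$ conditions using the $\Sigma^0_1$ word problem), and the same hardness argument reusing the groups $G_e$ from Theorem \ref{rp_nil}, which are finitely presentable exactly when $e\in\COF$. Your abelianization argument that the infinite direct sum is not finitely generated is just a more detailed justification of the step the paper leaves implicit.
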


\begin{proof}

$\Sigma^0_3$-hardness follows immediately from the proof of Theorem \ref{rp_nil}, since $G_e$ is finitely presentable if and only if $e\in \COF$. Being finitely presentable is characterized (informally) by the statement below.  Let $G=\langle x_1, x_2, \ldots ~|~ R_1, R_2, \ldots \rangle$.  We write $\overline{g}(\overline{x})$ for a finite sequence of words in the generators and their inverses, $\{x_i^{\pm 1}\}_{i\in\nats}$, and $w(\overline{g})$ for a word on the elements of $\overline{g}$ and their inverses, and $\overline{w}(\overline{g})$ for a sequence of such words. 

\noindent Now, group $G$ is finitely presentable if and only if the following $\Sigma^0_3$ formula holds. $$\begin{array}{c}(\exists \overline{g}(\overline{x})\in F_G^{<\omega})(\exists \overline{w}(\overline{g})\in F_{\overline{g}})(\forall h\in F_G)(\forall u,v\in F_G)(\forall s,t\in \nats) (\exists s', t' \in \nats)\\ (h=_G v) \wedge (u\in 1_{G,s} \to u\in 1_{\overline{g},s'})\wedge ( v\in 1_{\overline{g},t} \to v\in 1_{G,t'}).\end{array}$$  The theorem follows.

\end{proof}



\medskip




\begin{theorem}

Detecting a solvable group is $\Sigma^0_3$-complete in the class of r.p.~groups, and even in the class of residually solvable r.p.~groups.

\end{theorem}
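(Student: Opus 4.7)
The plan is to run the argument of Theorem~\ref{rp_nil} with the lower central series replaced by the derived series. For the upper bound, recall that the derived series is $G^{(0)} = G$, $G^{(i+1)} = [G^{(i)}, G^{(i)}]$, and $G$ is solvable iff $G^{(n)} =_G 1_G$ for some $n$. Defining the iterated commutator on $2^n$ variables by $c_0(g) = g$ and $c_{n+1}(g_1,\ldots,g_{2^{n+1}}) = [c_n(g_1,\ldots,g_{2^n}),\, c_n(g_{2^n+1},\ldots,g_{2^{n+1}})]$, a straightforward induction on $n$ shows that $G^{(n)} = 1_G$ if and only if $c_n(\vec{g}) =_G 1_G$ for every $\vec{g} \in G^{2^n}$. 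Hence solvability has the $\Sigma^0_3$ characterization
$$\exists n \in \nats ~ \forall \vec{g} \in F_G^{2^n} ~ c_n(\vec{g}) =_G 1_G.$$

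For $\Sigma^0_3$-hardness I would reduce $\COF$ using the exact template of Theorem~\ref{rp_nil}. The ingredient needed is a uniformly finitely presented sequence $\{H_n\}_{n\geq 1}$ of solvable groups with strictly increasing derived length. The iterated regular wreath products $H_n = \ints_2 \wr \ints_2 \wr \cdots \wr \ints_2$ ($n$ factors) serve: each is finite (hence finitely presented), solvable, and a standard computation of the derived series of iterated wreath products of nontrivial abelian groups gives derived length exactly $n$. Initialize $G_{e,0} = \dbigoplus_{n\geq 1} H_n$ via the evident presentation, and at stage $s+1$, whenever $n \in W_{e,s+1} - W_{e,s}$, enumerate the (finitely many) generators of $H_n$ into the relators of the running presentation, trivializing that summand.

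If $W_e$ is cofinite then $G_e$ collapses to the finite direct sum $\dbigoplus_{n\in \overline{W_e}} H_n$, which is solvable. If $W_e$ is coinfinite then $G_e$ contains $H_n$ as a direct factor for infinitely many $n$, so its derived length is unbounded and it is not solvable. In either case $G_e$ is residually solvable: given $g \neq_G 1_G$, only finitely many coordinates of $g$ in the direct sum are nontrivial, and quotienting $G_e$ by the complementary summand yields a finite direct sum of solvable groups in which the image of $g$ survives. The one point requiring care is verifying that the iterated wreath products have derived length \emph{exactly} $n$ rather than merely at least $n$, since this is what ensures the cofinite case actually yields a bounded derived length; I would handle this by citing the standard derived-length formula for iterated wreath products of abelian groups rather than reproving it here.
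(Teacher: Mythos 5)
Your proposal is correct, and its skeleton is the same as the paper's: the identical $\Sigma^0_3$ formula via the iterated commutator word of depth $n$, and the same $\COF$-reduction template as in Theorem \ref{rp_nil} (start from a presentation of $\bigoplus_n H_n$ and trivialize the $n$th summand when $n$ enters $W_e$), with the same analysis of the cofinite and coinfinite cases and the same residual-solvability remark. The only genuine difference is the choice of the witness family $\{H_n\}$. The paper takes $H_n = F_2/F_2^{(n)}$, the free solvable group of rank $2$ and class $n$; these are infinite but uniformly computable (citing \cite{MRUV}), and have derived length exactly $n$ for free since $F_2$ is not solvable. You instead take the $n$-fold iterated regular wreath product of copies of $\ints_2$, which is finite and hence uniformly finitely presentable; this makes your argument a closer parallel to the paper's own nilpotency proof (which uses $\ints_{p_n}\wr\ints_{p_n}$) and removes the dependence on the computability of free solvable groups, at the cost of having to invoke the classical fact that iterated wreath products of $n$ nontrivial abelian groups have derived length exactly $n$ (which you correctly flag as the point needing a citation or a short induction, and which does hold, e.g.\ for the Sylow $2$-subgroups of $\Sym(2^n)$). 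Either family does the job; your version trades a computability citation for a group-theoretic one and otherwise proves the same theorem the same way.
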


\begin{proof}

Recall that group $G$ is solvable if its derived series is finite.  That is, there is an $n\in \nats$ so that $$G=G_0\geq G_1 \geq \cdots \geq G_n=\{1_G\},$$ where for each $i< n$, $G_{i+1} = [ G_i,G_i]$.  In the language of first-order logic, we can write  $\exists n\in \nats ~ \forall \vec{g}\in G^{2^n} $, 
$$ \left[\cdots[[g_1, g_2],[g_3,g_4]],\cdots],[\cdots , [[g_{2^n-3},g_{2^n-2}],[g_{2^n-1},g_{2^n}]\right]\cdots] =_G 1_G,$$ i.e., every $n$-deep commutator of the correct form evaluates to the identity in $G$.  This is a $\Sigma^0_3$ formula (the matrix as shown is $\Sigma^0_1$), so it remains to show completeness.

As in the construction in the proof of Theorem \ref{rp_nil}, we will enumerate a presentation of a group $G_e$ which, when $e\in \COF$, is isomorphic to a direct sum of finitely many solvable groups, so is itself solvable.  When $e\not\in \COF$, $G_e$ will contain subgroups of arbitrarily large solvability class.

For each $n\in \nats$, let $H_n$ be the \textit{free solvable group} of rank 2 and class $n$.  That is, $H_n$ will be the quotient of the free group $F_2$ by its $n$th derived subgroup.  $H_n$ is computable uniformly in $n$ (\cite{MRUV}), so has a recursive presentation.  

As in the previous construction, we begin the construction with a presentation of the direct sum $G_{e,0}=\bigoplus_{n\in\omega} H_n$. Whenever $n\in W_{e,s+1} - W_{e,s}$, we enumerate the generators of $H_n$ into the relators of our presentation of $G_e$.  
 
If $e\in \COF$, $G_e$ has solvability class $\max(\overline{W_e})$, and otherwise is residually solvable but not solvable.

\end{proof}

\section{Decision problems in the class of computable groups.}

\subsection{General theorem and immediate consequences}

That decision problems for Markov properties in the class of computable groups are $\Pi^0_1$-hard follows from work of Lockhart and Collins (\cite{Lockhart1981, Collins1970}).  Here, we give a new proof which is entirely constructive. 

\begin{theorem}\label{markov_comp} 
Let $P$ be a Markov property for computable groups and let $G_+$ and $G_-$ witness that $P$ is Markov as in definition \ref{markov_def}. Then detection of $P$ is $\Pi^0_1$-hard in the class of computable groups.
\end{theorem}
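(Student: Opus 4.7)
The plan is to reduce the $\Pi^0_1$-complete set $\mathrm{EMP} = \{e : W_e = \emptyset\}$ to the detection problem. Assuming $G_+$ is infinite (as indicated in the summary table), we will uniformly construct a computable group $G_e$ with universe $\mathbb{N}$ so that $G_e \cong G_+$ when $W_e = \emptyset$ and $G_e \cong G_+ \ast G_-$ otherwise. The Markov property then forces $G_e$ to have $P$ exactly when $e \in \mathrm{EMP}$, which yields the desired $\Pi^0_1$-hardness.

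For the construction, fix computable enumerations of $G_+$, of $G_-$, and of the free product $K = G_+ \ast G_-$, which is computable because $G_+$ and $G_-$ are. Define $L_e : \mathbb{N} \to K$ stage by stage. Set $L_e(0) = 1_K$. At stage $s \geq 1$: if $W_{e,s} = \emptyset$, let $L_e(s)$ be the least-indexed (in the fixed enumeration of $K$) element of $G_+$ not yet in the image of $L_e$; if $W_{e,s} \neq \emptyset$, let $L_e(s)$ be the least-indexed element of $K$ not yet in the image. Then equip $\mathbb{N}$ with the operation $n \cdot_{G_e} m := L_e^{-1}\bigl(L_e(n) \cdot_K L_e(m)\bigr)$.

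The two cases yield the desired isomorphism types. If $W_e = \emptyset$, every stage enumerates a fresh element of $G_+$ and $L_e$ becomes a bijection $\mathbb{N} \to G_+$, so $G_e \cong G_+$. If $W_e$ first becomes nonempty at some stage $s_0$, then stages $0, \ldots, s_0 - 1$ enumerate an initial segment of $G_+$, and from stage $s_0$ onward the construction enumerates every element of $K$ not yet in the image (including any $G_+$-elements skipped earlier); hence $L_e$ is a bijection $\mathbb{N} \to K$, $G_e \cong K = G_+ \ast G_-$, and $G_e$ contains a copy of $G_-$ and therefore fails $P$ by the Markov condition.

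The main technical obstacle is making the group operation total and computable without ever deciding the $\Pi^0_1$-predicate $W_e = \emptyset$. We handle this by doing all arithmetic inside the ambient computable group $K$: given $n, m$, compute $L_e(n)$ and $L_e(m)$ by running $\max(n,m)$ stages, evaluate $p := L_e(n) \cdot_K L_e(m)$ in $K$, and then search through stages for the $s$ with $L_e(s) = p$. The search halts because the eventual image of $L_e$ is a subgroup of $K$ closed under the product (it equals $G_+$ in the first case and all of $K$ in the second), so $p$ is eventually enumerated. The hypothesis that $G_+$ is infinite is used precisely here, to keep the universe of $G_e$ equal to $\mathbb{N}$ in both cases, so that we never need to decide $W_e = \emptyset$ in order to determine membership in the universe.
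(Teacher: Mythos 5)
Your proof is correct and follows essentially the same strategy as the paper's: a $\Pi^0_1$-controlled delayed enumeration that produces a copy of $G_+$ if the c.e.\ event never occurs and a computable group containing $G_-$ if it does (you use the free product $G_+\ast G_-$, the paper the direct product $G_+\times G_-$), with an infinite witness keeping the universe equal to $\mathbb{N}$ so the diagram is uniformly computable. The remaining differences are cosmetic: you reduce from $\{e : W_e=\emptyset\}$ rather than $\overline{K}$ and pull the operation back along a bijection into the ambient computable group instead of coding pairs directly, and you assume $G_+$ infinite (as the summary table indicates) where the paper instead arranges infinitude by replacing $G_-$ with $G_-\times\mathbb{Z}$.
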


\begin{proof} Note that we may as well assume $G_-$ is infinite since $G_-$ is a subgroup of the direct sum of itself with the (computable) additive group of integers, $G_-\times \ints$.  This product is necessarily computable and fails to have property $P$ by definition.

The strategy here is to use the computable atomic diagrams of $G_+$ and $G_-$ to build uniformly in $e$ a computable atomic diagram of a group $G_e$ so that $G_e\cong G_+$ if $\varphi_e(e)\uparrow$, and $G_e \cong G_+ \times G_-$ if $\varphi_e(e)$ eventually halts.  If we can manage this, we will have $$e\in \overline{K} \iff G_e\models P.$$  Since $\overline{K}$ is a $\Pi^0_1$-complete set, it follows that detecting $P$ is $\Pi^0_1$-hard.

Let $G_+ = \{g_0=1_+, g_1, \ldots\}$ and $G_- = \{h_0=1_-, h_1, \ldots\}$ be enumerations of the groups witnessing that $P$ is Markov without repetitions.  The universe of $G_e$ will be $\nats$, and we give a coding map, $\langle \cdot \rangle$, and enumerate the atomic diagram in stages below.

\medskip

\emph{Construction.}

\emph{Stage 0.}  Let $\langle (g_0, h_0)\rangle = 0$, and add $(0,0,0)$ to the atomic diagram indicating that $(g_0, h_0) \ast (g_0, h_0) = (g_0, h_0)$ in the group we are constructing.

\smallskip

\emph{Stage $s+1$.}  We begin this stage with a coding map having an initial segment of the natural numbers as its range, and a finite set of triples.  There are three cases.

\begin{enumerate}

\item $\varphi_{e,s+1}(e)\uparrow$.  Let $i$ be the least index of an element of $G_+$ for which $(g_i, h_0)$ has not yet been assigned a code, and assign it the least available code.  Next, let $j$ be the least index of an element of $G_+$ for which there exists a $k\leq j$ such that there is no tuple yet in the diagram for  $G_e$ indicating the product $(g_j,h_0) \ast (g_k, h_0)$.  For each such $k\leq j$, assign both $(g_jg_k, h_0)$ and $(g_kg_j,h_0)$ codes (if necessary), and add the corresponding triples to the diagram.  For example, if $g_j g_k = g_n$ in $G_+$, and $(g_n, h_0)$ does not already have a code, we assign one to it, say $m$, and add the triple $(\langle (g_j, h_0)\rangle, \langle (g_k, h_0)\rangle, m)$ to the diagram.

\item $\varphi_{e,s+1}(e)\downarrow$ but $\varphi_{e,s}(e)\uparrow$. This is the exact stage where $e$ enters the halting set.  After we have executed this stage once, all subsequent stages will be instances of case (3).  

So far, we have a partial diagram of a copy of $G_+\times \{1_- = h_0\}$.  We now begin to build out $G_-$ in the second coordinate.  Assign fresh natural number codes systematically to $\langle (g_i, h_1)\rangle$ for all $i>0$ for which $(g_i, h_0)$ has been assigned a code, and add tuples to the diagram accordingly.  (So, for example, if $17=\langle (g_5, h_0) \rangle$, $39 = \langle (g_4, h_1) \rangle$, $g_5g_4 = g_{11}$ in $G_+$, and $\langle (g_{11}, h_1) \rangle = 65$, we'd add $(17, 39, 65)$ to the diagram.)

\item $\varphi_{e,s}(e)\downarrow$. Here, $e$ entered the halting set at some previous stage.  Let $i$ and $j$ be the least indices for which $(g_i, h_0)$ and $(g_0, h_j)$ have not been assigned codes, and assign them codes.  Add all tuples of the form $$(\langle(g_u, h_v)\rangle, \langle(g_x, h_y)\rangle, \langle(g_ug_x, h_vh_y)\rangle)$$ for $u,x \leq i$ and $v,y\leq j$ (assigning codes as needed) to the diagram of $G_e$.

\end{enumerate}

\emph{End of construction.}

\medskip

It is clear from the construction that the group is computable, and that when $e\in\overline{K}$, $G_e$ is isomorphic to $G_+$, and so has property $P$.  If $e\in K$, $G_e \cong G_+ \times G_-$, and will fail to exhibit $P$.
\end{proof}

\begin{cor}

Detection of the following properties is $\Pi^0_1$-complete in the class of computable groups. 

\begin{itemize}
\item Being abelian.
\item Being torsion-free.
\item Being totally left- or bi-orderable.
\end{itemize}

\end{cor}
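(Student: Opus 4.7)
The plan is to package each item as a routine application of Theorem \ref{markov_comp} for the lower bound, together with a $\Pi^0_1$ upper bound obtained from the characterizing formulas. The crucial observation that drops us from $\Pi^0_2$ (the level at which these properties sit for r.p.~groups, as in Corollary \ref{cor_rpm}) down to $\Pi^0_1$ is that in a \emph{computable} group, the equality predicate $=_G$ is decidable rather than merely $\Sigma^0_1$.

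First I would record the upper bounds. Abelianness is directly $\Pi^0_1$ via $\forall x\, \forall y\, (xy = yx)$. Torsion-freeness is $\Pi^0_1$ via $\forall x\, \forall n\, (x = 1_G \vee x^n \neq 1_G)$, since both disjuncts are decidable in a computable group. For total left- and bi-orderability, the $\Pi^0_2$ characterizations used in Corollary \ref{cor_rpm} (deferred to Sections \ref{po_sec} and \ref{bo_sec}) collapse to $\Pi^0_1$ in the same fashion, because the existential quantifiers over witnesses of equality become bounded searches over a decidable predicate.

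Next I would establish the lower bounds by exhibiting, for each property, \emph{computable} group witnesses $G_+$ and $G_-$ of Markovness and then invoking Theorem \ref{markov_comp}. For abelianness, take $G_+ = \ints$ and $G_- = F_2$, noting that any group containing a nonabelian free group is nonabelian. For torsion-freeness, take $G_+ = \ints$ and $G_- = \ints/2\ints$. For the orderability properties, the same witnesses used in Corollary \ref{cor_rpm} serve (these are computable groups, as discussed in Sections \ref{po_sec} and \ref{bo_sec}). In each case Theorem \ref{markov_comp} gives the $\Pi^0_1$-hardness.

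There is no real obstacle, only one small bookkeeping point: Theorem \ref{markov_comp} requires an \emph{infinite} $G_-$. Each of the witnesses above is either already infinite or can be replaced by its product with $\ints$, which remains computable and still blocks the relevant property (any supergroup of $G_- \times \ints$ is still nonabelian, has torsion, etc.). With that noted, the corollary is immediate.
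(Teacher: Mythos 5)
Your proposal is correct and follows essentially the same route as the paper: the characterizing formulas from Corollary \ref{cor_rpm} drop to $\Pi^0_1$ because equality is decidable in a computable group, and hardness comes from computable instances of the same Markov witnesses fed into Theorem \ref{markov_comp}. Your side remark about replacing $G_-$ by $G_-\times\ints$ simply reproduces the reduction already built into the proof of Theorem \ref{markov_comp} (the infinitude that actually matters for that construction is that of $G_+$, which your witnesses satisfy since each $G_+$ is $\ints$), so nothing further is needed.
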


\begin{proof}

The characterizing formulas of these properties given in the proof of Corollary \ref{cor_rpm} become $\Pi^0_1$ since equality (and inequality) is computable. Moreover, since finite groups and free groups have computable copies and free groups are infinite, we can take computable instances of the same witnesses as before and apply Theorem \ref{markov_comp}.

\end{proof}

\subsection{Completeness at higher levels of the arithmetical hierarchy}

\begin{theorem}\label{thm_tor_cg}

Detecting torsion groups is $\Pi^0_2$-complete in the class of computable groups. 
\end{theorem}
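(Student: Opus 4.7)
The plan is to establish both containment in $\Pi^0_2$ and $\Pi^0_2$-hardness. For the upper bound, since equality is computable in a computable group, the natural characterization
\[
\forall w\in G\ \exists n \in \nats^{>0}\ w^n =_G 1_G
\]
is $\Pi^0_2$, so detection of torsion lies in $\Pi^0_2$.

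For hardness I would reduce the $\Pi^0_2$-complete set $\INF = \{e : |W_e| = \omega\}$. Uniformly in $e$, the plan is to build a computable atomic diagram of a group
\[
G_e \;\cong\; \bigoplus_{n\in\nats} C_n,
\]
where each $C_n$ is cyclic, generated by a formal symbol $x_n$, and where $C_n$ is finite if and only if an $(n+1)$st element is eventually enumerated into $W_e$. Because any element of the direct sum is supported on only finitely many coordinates, $G_e$ will be torsion iff every $C_n$ is finite iff $e \in \INF$.

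The construction proceeds in stages. The universe is $\nats$, and I would maintain a coding map assigning fresh codes to normal-form words $x_0^{k_0}\cdots x_m^{k_m}$ together with the multiplication triples they determine as a computable direct sum. At each stage, for every generator $x_n$ (with $n$ bounded by the current stage) whose $C_n$ has not yet been finalized, I would introduce one new distinct power of $x_n$ and extend the diagram with the products it makes with previously coded elements. When a stage $s+1$ witnesses a new element entering $W_{e}$, I would finalize the least non-finalized $C_n$ by declaring the next as-yet-unassigned power $x_n^{\ell}$ to equal $1_G$; thereafter all products involving $x_n$ are computed modulo $\ell$, and no further powers of $x_n$ are introduced.

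The main obstacle is to verify that the resulting atomic diagram is truly computable rather than merely computably enumerable: two codes already declared distinct must never be forced equal. The lazy introduction of powers ensures that finalizing $C_n$ only identifies a fresh symbol $x_n^{\ell}$ with $1_G$, never collapsing two pre-existing codes, and the direct-sum structure then lets every product be computed unambiguously from the partial data. With that checked, if $e\in\INF$ every $C_n$ is finalized and $G_e$ is torsion, while if $W_e$ has finite cardinality $m$ then $C_n\cong\ints$ for all $n\geq m$ and $G_e$ contains an element of infinite order, completing the reduction.
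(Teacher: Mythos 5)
Your overall architecture is the same as the paper's: the $\Pi^0_2$ upper bound via $\forall w\,\exists n\,(w^n=1_G)$ is identical, and for hardness both you and the paper reduce $\INF$ by building, uniformly in $e$, a computable direct sum of cyclic groups in which a coordinate is given finite order each time a new element enters $W_e$, so that the group is torsion exactly when $W_e$ is infinite. However, there is a genuine gap at precisely the step you identify as the main obstacle. Your finalization rule -- ``declare the next as-yet-unassigned power $x_n^{\ell}$ to equal $1_G$'' -- is justified by the claim that this ``only identifies a fresh symbol with $1_G$, never collapsing two pre-existing codes.'' That reasoning is incorrect: imposing $x_n^{\ell}=1_G$ forces $x_n^{a}=x_n^{b}$ whenever $a\equiv b \pmod{\ell}$, so it can identify elements that already have distinct codes even though the symbol $x_n^{\ell}$ itself was fresh. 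And such identifications really do occur under your rule, because inverses cannot be postponed forever: since you cannot know in advance whether $C_n$ will ever be finalized, negative powers of $x_n$ (and normal-form words using them) must be assigned codes along the way to keep the universe a group. If the coded exponents in coordinate $n$ span, say, $\{-k,\dots,k\}$ and $\ell=k+1$ is the next unassigned power, then $x_n^{1}$ and $x_n^{-k}$ become equal, collapsing two distinct elements of the universe and destroying the computable (indeed, the well-defined) atomic diagram.

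The fix is exactly the device the paper builds into its construction: when finalizing a coordinate, choose the order strictly larger than the spread of all exponents already appearing there (the paper takes addition modulo $4m$, ``shifted by $2m$,'' where $m$ bounds the entries already in use), so that all previously coded elements remain pairwise distinct and all previously declared product triples remain valid modulo the new order. With that modification your reduction goes through and the two constructions differ only cosmetically (the paper keeps a single ``open'' coordinate at a time, while you keep all unfinalized coordinates growing simultaneously, which is harmless); as written, though, the consistency argument you give for the crucial step does not hold.
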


\begin{proof}

The formula $\forall g\in G ~\exists n \in \nats^{>0} ~ (g^n = 1_G)$ is a $\Pi^0_2$ formula characterizing torsion groups. 

\medskip

To show completeness, we reduce the $\Pi^0_2$-complete set, $\INF=\{e\in \nats ~|~ |W_e|=\omega\}$, of indices of finite sets to the index set of non-torsion groups.  We construct for each $e\in\nats$, a computable abelian group $G_e$ that is non-torsion if and only if $W_e$ is finite.  At each stage $s$, we give a set $G_{s} = \{0, x_{\pm 1}, x_{\pm 2},\ldots  x_{\pm k_s}\}$ of natural numbers indexed by integers as the $s$th approximation of $G_e$.  We will index elements of the universe by integers, but in the end, the universe of the group will be the natural numbers.

The group we build will be isomorphic to a group of the form
$$\ints_{n_1}\times \ints_{n_2}\times\cdots \times \ints_{n_ k} \times \cdots $$  if $W_e$ is infinite, or of the form  $$\ints_{n_1}\times \ints_{n_2}\times \cdots \times \ints_{n_ k} \times \ints$$ if $W_e$ is finite.  The values $n_i$ will be determined by the stages that elements appear in the enumeration of $W_e$.  For example, if the $m$th element appears at stage $s$ and $(m+1)$st element appears at stage $t$, then $n_{m+1}=2^{(t-s)+1}$.


Each of the $x_i$'s will behave like a tuple of integers under coordinate-wise addition. We will arrange that the inverse of $x_i$ is $x_{-i}$ for each $i$.  To simplify discussion, we will denote the tuple  assigned to the natural number $x_i$ by $[x_i]$.  

So, for example, if the 17th and 18th elements added to the group are to ``behave like'' $(0,1,2)$ and $(0,-1,-2)$, we would have $[17]=[x_j] = (0,1,2)$ and $[18]=[x_{-j}] = (0,-1,-2)$ for some $j$, and observe that in our group, $17 + 18 = 0$, since we will set $[0]=(0)$.

When we speak of computing sums of tuples of different lengths, we will assume appended padding zeros at the end of the shorter tuple as necessary, i.e., $(2,3,4) + (1,3,0,5,6) = (3,6,4,5,6)$, modulo $n_i$ in the $i$th component.  The length of a tuple is the length of the sequence up to the last non-zero entry (e.g., the length of $(0,2,45,-11,0,0,\ldots)$ is 4).

At any given moment in the construction, we will have an element $x$ that has not been assigned a finite order, so has the potential to wind up being a non-torsion element in the end.  Whenever a new element enters $W_{e,s+1}$, we assign a finite order to $x$ by declaring a multiple of it to be the identity in such a way that we do not interfere with any sums previously declared.  Any time we add a new element to the group, we will assign it and its inverse names, $x_i$ and $x_{-i}$ for some $i\in\ints$.

\medskip

\noindent\emph{{Construction.}}

\noindent \emph{Stage 0.} We will use $x_0=0$ as the identity for our group, and begin the construction with $G_0 = \{x_0=0,x_1=1,x_{-1}=2\}$ where $[x_0]=[0]=(0), [x_1]=[1]=(1),$ and $[x_{-1}]=[2]=(-1)$.  In what follows, we at times conflate the natural number $x_i$ and the tuple $[x_i]$.

\smallskip

\noindent \emph{Stage $s+1$.} We begin this stage with $G_s = \{x_0, x_{\pm 1}, x_{\pm 2},\ldots  x_{\pm k_s}\}$, and each of these is mapped to some finite tuple of integers via the square bracket function.  Let $n$ be the length of the longest tuple(s) in $G_s$, and let $m$ be the largest positive value of the $n$th components of elements of $G_s$.  There are two cases:

\emph{Case 1.} $W_{e,s+1}-W_{e}=\emptyset$. In this case, we extend $G_s$ to $G_{s+1}$ by computing the coordinate-wise sums of all pairs of tuples in $G_s$, and assign fresh $x_i$'s to sums that are not already in $G_s$ as needed.  Note that the value in the $n$th component of the resulting sums will be no more than $2m$ and no less than $-2m$. 

\emph{Case 2.}  $W_{e,s+1}-W_e\neq \emptyset$.  When this is the case, we need to introduce torsion.  To do this, we extend $G_s$ to $G_{s+1}$ by adding sums of pairs of tuples in $G_s$ using modulo $4m$ addition in the $n$th component, but ``shifted'' by $2m$ from the usual notation.  So, for example, if $m$ is 4, then we shall perform additions modulo 16, but shifted by 8 (so 5+7 is -4 rather than 12) to avoid having to change the square bracket function.  In the end, we have the values in the $n$th components between $-2m$ and $2m-1$ only, and all subsequent additions in this component will be carried out modulo $4m$ in this manner.  

In Case 2, we also add two new tuples of length $n+1$, $(0,\ldots,0,1)$ and $(0,\ldots,0,-1)$ to $G_{s+1}$, and take pair-wise sums as described in Case 1.

Let $G_e = \bigcup_s G_s$.

\noindent{\emph{End of construction.}}

\medskip

We finish the proof of the theorem with a sequence of lemmas.

~

\begin{lemma}
$G_e$ is a computable group.

\end{lemma}

\begin{proof}

It is clear that $G_e$ is a group.  To compute the sum of $x_j$ and $x_k$, execute the construction to the stage $s$ where both values have been added to $G_s$.  At stage $s+1$, their sum will be defined (and of course will not be changed later).

\end{proof}

\begin{lemma}
If $e\in \FIN$, then $G_e$ has a non-torsion element.
\end{lemma}

\begin{proof}
If $e \in \FIN$, then there is a stage $s$ so that $W_{e,s} = W_{e,s'}$  for all stages $s'\geq s$. From that stage on, only Case 1 in the construction will be executed, and the result is a group isomorphic to $$\ints_{n_1}\times \ints_{n_2}\times \cdots \times \ints_{n_ k} \times \ints$$ for some $\{n_1, \ldots, n_k\} \subset \nats$ where $k$ is the cardinality of $W_e$.
\end{proof}

\begin{lemma}

If $e\in \INF$, then $G_e$ is a torsion group.

\end{lemma}

\begin{proof}

If $e \in \INF$ then there are infinitely many stages $s$ for which $W_{e,s}\neq W_{e,s+1}$ so the construction will execute Case 2 infinitely often.

Let $x$ be a natural number that enters the group at stage $s$ and $[x] = (x_1, x_2, \ldots, x_n)$ be of length $n$ (so $x_n\neq 0$).  Note that for each $i<n$, the order of $(0,\ldots, 0, x_i, 0, \ldots, 0)$ is determined by the stages $t$ and $t'$, at which the $(i-1)$st and $i$th elements entered $W_e$, in particular, the order divides $o_i=2^{(t'-t)+1}$.  

Now let $s'>s$ be the least so that $W_{e,s}\neq W_{e,s'}$.  In this stage, Case 2 will be executed, so the element $(0, \ldots, 0, x_n)$ will have finite order that divides $2^{(s'-s'')+1}$, where $s''<s$ is the largest so that $W_{e,s''}\neq W_{e,s}$.  

At the end of stage $s'$, the element $x$ has finite order that divides $o_1o_2\cdots o_n$. 

\end{proof}

We have shown that $e \in \INF$ if and only if $G_e$ is a torsion group, and the proof is complete.

\end{proof}

\begin{theorem}
Detecting divisibility is $\Pi^0_2$ -complete in the class of computable groups.
\end{theorem}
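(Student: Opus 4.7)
The plan is to pin the complexity at $\Pi^0_2$ from both sides. For the upper bound, divisibility of a computable group $G$ is captured by
\[ \forall g \in G \; \forall n \in \nats^{>0} \; \exists h \in G \; (h^n =_G g), \]
which is $\Pi^0_2$ uniformly in the index of $G$ because equality is decidable in a computable group.

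For $\Pi^0_2$-hardness I would reduce $\INF = \{e : |W_e|=\omega\}$ to the index set of divisible computable groups. The target is to produce, uniformly in $e$, a computable abelian group $G_e$ isomorphic to the subgroup
\[ \ints\!\left[\,\tfrac{1}{p_k} \,:\, k \leq |W_e|\,\right] \;\leq\; \mathbb{Q}, \]
where $p_k$ is the $k$-th prime and the primes unlocked are indexed by how many elements $W_e$ has enumerated. If $W_e$ is infinite, every prime is eventually inverted and $G_e \cong \mathbb{Q}$, which is divisible. If $|W_e|=n$ is finite, $G_e \cong \ints[1/(p_1 \cdots p_n)]$, and the element $1 \in G_e$ has no $p_{n+1}$-th root, so $G_e$ fails to be divisible. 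This gives the required biconditional $e \in \INF \iff G_e$ is divisible.

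To realize $G_e$ as a genuinely computable group with universe $\nats$, I would adapt the staged bookkeeping from the proof of Theorem \ref{thm_tor_cg}. Initialize by assigning fresh natural-number codes to $0$, $1$, and $-1$. At stage $s+1$, if $W_{e,s+1} - W_{e,s} = \emptyset$, execute the next scheduled pairwise sum of previously coded elements, introducing a fresh code if the sum is new; otherwise, if the $k$-th new element enters $W_e$, also introduce fresh codes for $\pm 1/p_k$ and enqueue all sums with previously coded elements. A round-robin schedule ensures every pair of codes eventually has its sum recorded, so the atomic diagram converges to a total computable function and commits irrevocably to a single representative for every rational that ends up in $G_e$. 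The main obstacle is not algebraic—the contrast between $\mathbb{Q}$ and $\ints[1/m]$ is standard—but this organizational bookkeeping that guarantees the growing finite approximations cohere into a computable atomic diagram for the intended limit group.
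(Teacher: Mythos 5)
Your upper bound is fine, and your hardness strategy (reduce $\INF$ by building, uniformly in $e$, a computable additive subgroup of $\mathbb{Q}$ that is all of $\mathbb{Q}$ exactly when $W_e$ is infinite) is the same as the paper's. But there is a genuine gap in the reduction as you describe it: your construction only ever introduces codes for $\pm 1/p_k$ when the $k$-th element enters $W_e$, and then closes under \emph{addition}. The additive subgroup of $\mathbb{Q}$ generated by $\{1\}\cup\{1/p_k : k\in\nats\}$ is not $\mathbb{Q}$ and not the localization $\ints[1/p_k : k\in\nats]$ you name as your target; it is the group of rationals with squarefree denominator (at each finite stage it is $\frac{1}{p_1\cdots p_k}\ints$). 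Group addition alone can never manufacture $1/p^2$ from $1/p$ and integers --- the ring notation $\ints[\cdot]$ smuggles in multiplicative closure that the atomic diagram of a group does not provide. Consequently, in the case $e\in\INF$ your limit group is \emph{not} divisible (the element $1$ has no $h$ with $4h=1$), and the biconditional $e\in\INF \iff G_e$ divisible fails in the direction you need.

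The fix is small and is exactly what the paper does: when the $m$-th element is enumerated into $W_e$, adjoin a code for $1/m$ (where $m=|W_{e,s+1}|$), rather than $1/p_m$. Then the limit group is the subgroup of $\mathbb{Q}$ generated by $\{1/m : m\le |W_e|\}$, which is all of $\mathbb{Q}$ when $W_e$ is infinite, and when $|W_e|=n$ is finite it is the subgroup generated by $\{1,\tfrac12,\ldots,\tfrac1n\}$, in which no element is divisible by any $m>n$. (Alternatively, you could keep the primes but dovetail in codes for all powers $1/p_k^{\,j}$, $j\in\nats$, so that the additive group really is the localization; either way the isomorphism type you assert must match what addition alone generates.) Your round-robin bookkeeping for totalizing the atomic diagram is otherwise sound and matches the paper's staged construction.
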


\begin{proof}

For computable groups, the characterizing formula for divisibility is still $\Pi^0_2$: $$\forall g \in G~ \forall n \in \nats^{>0}~ \exists  h \in H~ ( g =1_G \vee h^n = g).$$ 

For completeness we reduce $\INF$, the $\Pi^0_2$-complete set of indices of infinite c.e.~sets, to the index set of divisible groups. For each $e$ we construct the atomic diagram $M_e$ of a group $G_e$ isomorphic to $(\mathbb{Q}, +)$ if $e \in \INF$ and isomorphic to some non-divisible additive subgroup of $\mathbb{Q}$ otherwise.

\medskip

The domain of the atomic diagram, $dom(M_e)$, will be $\{g_0, g_1, \ldots\}=\nats$ where each $g_i$ is the name assigned to some rational number $[g_i]$. The atomic diagram $M_e$ will be a set of triples $(g_i, g_j , g_k)$ where $g_k$ is the name assigned to the rational number $[g_i] + [g_j]$. 

\medskip

\noindent\emph{{Construction.}}

\noindent \emph{Stage 0:} Set $g_0=0$, $g_1=1$, and $g_2=-1$. Begin the construction of the atomic diagram with all triples $(g_i, g_j , g_k)$ for which no new name must be defined. That is, $M_{e,0}$ is the set of triples, $$\{(g_0, g_0, g_0), (g_0, g_1, g_1), (g_1, g_0 , g_1), (g_0, g_2, g_2), (g_2, g_0, g_2), (g_1, g_2, g_0), (g_2, g_1, g_0) \}.$$

\smallskip

\noindent \emph{Stage s+1:}  We begin this stage with $ dom(M_{e,s}) = \{ g_0, g_1, \ldots g_{n_s} \}$ and some set of triples $M_{e,s}$. First extend $M_{e,s}$ to $M_{e,s+1}$ by adding the triples $(g_i, g_j , g_k)$ for all $g_i, g_j$ already in the domain of $M_{e,s}$ assigning new names as needed. 

Next, if $W_{e,s+1} - W_{e,s} = \emptyset$, proceed to the next stage. If $W_{e,s+1} - W_{e,s} \neq \emptyset$, assign the next available name $g_n$ to the rational $\frac{1}{m}$, where $m=|W_{e,s+1}|$.

\smallskip

Set $M_e = \bigcup_s M_{e,s}$. 

\noindent\emph{{End of construction.}}

\medskip

Observe, by the construction $M_e$ and $dom(M_e)$ are computable. 

If $e \in \INF$, the resulting group is a computable copy of $(\mathbb{Q}, +)$, a divisible group. If $e \notin \INF$, no element of the group is divisible by any $n >n_e = |W_e|$  and we have a computable copy of the subgroup of $\mathbb{Q}$ generated by $\{1,\frac{1}{2}, \ldots \frac{1}{n_e}\}$.  The theorem follows. 

\end{proof}

\begin{theorem}
Detecting nilpotency is $\Sigma^0_2$ -complete in the class of computable groups.
\end{theorem}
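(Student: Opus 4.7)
The plan is to handle both directions separately, with the bulk of the work going into the hardness reduction. For the $\Sigma^0_2$ upper bound, since a computable group has decidable equality, the standard characterization of nilpotency yields
$$\exists n \in \nats \; \forall g_1, \ldots, g_{n+1} \in G \; [[\cdots[g_1,g_2],g_3],\cdots,g_{n+1}] = 1_G,$$
which is $\Sigma^0_2$ because the iterated commutator is a computable function of its inputs once $n$ is fixed.

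For $\Sigma^0_2$-hardness I would reduce the $\Sigma^0_2$-complete set $\FIN = \{e \in \nats : |W_e| < \omega\}$ to the detection of nilpotency in computable groups. This is the computable-group analogue of the reduction from $\COF$ used in Theorem \ref{rp_nil}, but now oriented the opposite way: we want $G_e$ to be nilpotent exactly when $W_e$ is finite. Reuse the uniformly computable sequence of finite nilpotent groups $H_n = \ints_{p_n} \wr \ints_{p_n}$ from that proof, recalling that $H_n$ has nilpotency class exactly $p_n$. For each $e$ I would build the atomic diagram of a computable copy of $G_e \cong \bigoplus_{n \in W_e} H_n$ in stages, maintaining at stage $s$ an atomic diagram of the finite group $\bigoplus_{n \in W_{e,s}} H_n$ coded on some initial segment of $\nats$. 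Whenever a fresh index $n$ enters $W_{e,s+1} - W_{e,s}$, attach a new direct summand $H_n$ by assigning fresh natural-number codes to every new pair $(a,h)$ with $a$ in the current group and $h \in H_n$, and recording the finitely many new product triples $(a,h)\cdot(b,k) = (ab,hk)$.

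Verification then splits into two cases. If $e \in \FIN$, then from some stage onward no new summands are adjoined, so $G_e$ is a finite direct sum of nilpotent groups of bounded nilpotency class and is therefore itself nilpotent, of class $\max_{n \in W_e} p_n$. If $e \notin \FIN$, then $G_e$ contains $H_n$ as a subgroup for arbitrarily large $n$, so for every candidate $c$ some $H_n$ with $p_n > c$ supplies an iterated commutator of depth $c+1$ that is not trivial in $G_e$; hence $G_e$ is not nilpotent. Therefore $G_e$ is nilpotent iff $e \in \FIN$, completing the reduction.

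The only real obstacle is the bookkeeping that keeps the atomic diagram computable while summands are adjoined on the fly. This turns out to be automatic: because both the partial group at stage $s$ and the incoming summand $H_n$ are finite, only finitely much atomic data needs to be written at each stage, and once a product is recorded it is never revised. So the construction carries through with essentially the same structural skeleton as in Theorem \ref{rp_nil}, with $\FIN$ replacing $\COF$ and with the atomic diagram replacing the recursive presentation.
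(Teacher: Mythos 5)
Your overall strategy is the paper's: the same $\Sigma^0_2$ characterizing formula, and a reduction of $\FIN$ using the uniformly computable finite groups $\ints_{p_n}\wr\ints_{p_n}$ of nilpotency class $p_n$, adjoining a new direct summand each time an element enters $W_e$. The group-theoretic verification (nilpotent of bounded class iff $W_e$ is finite, residually nilpotent but not nilpotent otherwise) is fine.

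However, there is a genuine gap precisely at the point you declare ``automatic.'' If $e\in\FIN$, your construction stops adjoining summands after some stage, so the universe of $G_e$ is a finite initial segment $\{0,\ldots,N\}$ of $\nats$ where $N$ cannot be computed from $e$. The reduction must be a computable function $f:\nats\to A$ landing in the index set $A$ of computable groups, i.e.\ $\varphi_{f(e)}$ must \emph{decide} the atomic diagram (and the universe). The machine you naturally extract from your construction can only \emph{enumerate} the diagram: asked about an element or triple that is never added, it waits forever for another element to enter $W_e$, so $\varphi_{f(e)}$ is partial and $f(e)\notin A$ exactly in the case $e\in\FIN$ that you need to land in the nilpotent side. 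The missing idea is to force the domain to be all of $\nats$ regardless of $W_e$: the paper does this by making $G_e\cong \ints\times W(1)\times\cdots$, i.e.\ keeping a fixed infinite computable abelian summand (a copy of $\ints$) that is extended at every stage, so that new codes are assigned cofinally and every membership and product query is eventually resolved. (Any fixed infinite computable abelian summand works, since it does not affect nilpotency.) With that padding added, your argument goes through and coincides with the paper's proof.
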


\begin{proof}

For computable groups, the characterizing formula for nilpotency is $\Sigma_2^0$, $$\exists n\in \nats^{\geq 2} ~ \forall \vec{g} \in G^n ~ [[...[g_0,g_1],g_2]...],g_n]=_G1_G,$$ where $[x,y]$ denotes the commutator $x^{-1}y^{-1}xy$.

For completeness we reduce $\FIN$, the $\Sigma_2^0$-complete set of indices of finite c.e.~sets, to the index set of the nilpotent groups. For each $e$ we construct the atomic diagram $M_e$ of a group $G_e$ which is nilpotent if and only if $e \in \FIN$.

\medskip

Let $W(n) = \mathbb{Z}_{p_n} \wr \mathbb{Z}_{p_n}$, where $\wr$ denotes the wreath product and $p_n$ is the $n$th prime number.   We note here that $W(n)$ has nilpotency class $p_n$ and that it is finite.

Our construction will yield a computable group $G_e$ that is a direct sum of the additive group of integers and $W(n)$'s so that

$$
G_e \cong \left\{
        \begin{array}{ll}
            \mathbb{Z} \times W(1) \times \ldots \times W(n) & \quad |W_e| = n \\
             \mathbb{Z} \times W(1) \times \ldots \times W(n) \times \ldots & \quad |W_e|=\omega
        \end{array}
    \right..
$$  If $e\in \FIN$, $G_e$ will be nilpotent of class $p_n$, and residually nilpotent otherwise.


The domain of $G_e$ will be $\{g_0, g_1, \ldots \}=\nats$, and we will approximate its diagram $M_e$ by finite extension. We simultaneously build the isomorphism, and denote by $[g_i]$ the tuple to which it corresponds.  For each $n\geq 1$, we write $1_n$ for the identity in $W(n)$.

\medskip

\noindent\emph{{Construction.}}

\noindent \emph{Stage 0:} Set $g_0$ as the identity of $G_e$, that is $[g_0] =(0,1_1,1_2, \ldots)$. To begin building the copy of the integers in the first component set $[g_1] =(1,1_1, 1_2, \ldots)$ and $[g_2] =(-1,1_1,1_2, \ldots)$. Begin the construction of the atomic diagram with the set of triples for which no new names must be assigned. So $M_{e,0}$ is the set of triples,  $$\{ (g_0, g_0 , g_0), (g_0, g_1, g_1), (g_1, g_0, g_1), (g_0, g_2 , g_2), (g_2, g_0, g_2), (g_1, g_2 , g_0), (g_2, g_1, g_0) \}.$$

\smallskip

\noindent \emph{Stage s+1:} We begin this stage with $dom(M_{e,s}) = \{g_0, g_1, \ldots, g_m\}$ and some set of triples $M_{e,s}$. 

First, extend $M_{e,s}$ with triples $(g_i, g_j , g_k)$ for all $g_i, g_j \in dom(M_{e,s})$, assigning new names for $g_k$ as needed.

\emph{Case 1.}  If $W_{e,s+1} - W_{e,s} = \emptyset$, proceed to the next stage.

\emph{Case 2.} If $W_{e,s+1} - W_{e,s} \neq \emptyset$, let $n=|W_{e,s+1}|$.  Assign fresh names, $g_j$, to $(0, 1_1, 1_{n-1}, \ldots , w, 1_{n+1}, \ldots)$ for each $w\in W(n)$, and add them to the domain (note that there will be $p_n^{p_n+1}-1$ such elements).

Let $M_e = \bigcup_s M_{e,s}$. 

\noindent\emph{{End of construction.}}

\medskip

The group $G_e$ is clearly computable.  Moreover, if $e \in \FIN$, $G_e \cong \mathbb{Z} \times W(1) \times \ldots \times W(n)$ where $n = |W_e|$, and is a group of nilpotency class $p_{n}$.

If $e \notin \FIN$,  $G_e \cong \mathbb{Z} \times W(1) \times \ldots \times W(n) \times \ldots$, which is residually nilpotent, but  not nilpotent.
\end{proof}

\begin{theorem}
Detecting a solvable group is $\Sigma^0_2$-complete in the class of computable groups.
\end{theorem}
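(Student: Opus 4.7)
The plan is to parallel the $\Sigma^0_2$-completeness proof for nilpotency in the class of computable groups given just above. For the upper bound, solvability is characterized by the sentence
\[
\exists n\in\nats~ \forall \vec{g}\in G^{2^n}~ \bigl[\cdots[[g_1,g_2],[g_3,g_4]]\cdots\bigr] =_G 1_G,
\]
asserting that every $n$-deep commutator on $2^n$ arguments is trivial in $G$. Because $=_G$ is computable in a computable group, the matrix is $\Sigma^0_0$ and the whole sentence is $\Sigma^0_2$.

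For $\Sigma^0_2$-hardness I would reduce $\FIN$ to the index set of solvable computable groups. As in the r.p.\ solvability theorem, let $H_n = F_2/F_2^{(n)}$ be the free solvable group of rank $2$ and derived length $n$; by \cite{MRUV} these groups have computable copies uniformly in $n$. For each $e$, I would build a computable group $G_e$ with isomorphism type
\[
G_e \cong \begin{cases} \mathbb{Z}\times H_1\times\cdots\times H_n & \text{if } |W_e|=n, \\ \mathbb{Z}\times H_1\times H_2\times\cdots & \text{if } |W_e|=\omega. \end{cases}
\]

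Structurally the construction mirrors the nilpotent case: initialize $G_{e,0}$ as a computable copy of $\mathbb{Z}$ in the first coordinate; at stage $s+1$, whenever $W_{e,s+1}-W_{e,s}\neq\emptyset$, activate a fresh coordinate to house $H_{|W_{e,s+1}|}$. The one new wrinkle compared to the nilpotent proof is that each $H_n$ is infinite rather than finite, so I cannot introduce all of $H_n$ at its activation stage the way the wreath products $\mathbb{Z}_{p_n}\wr\mathbb{Z}_{p_n}$ were introduced there. Instead I dovetail: at each stage $s+1$, enumerate one additional element of every currently active $H_k$ into the finite approximation, then for every pair of tuples already named, compute the coordinate-wise product and add the resulting triple to the atomic diagram, assigning a fresh code if needed. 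This bookkeeping is the main technical point, but it is routine given the uniform computability of the $H_n$, and it ensures that $G_e$ is computable uniformly in $e$.

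Verification then matches the nilpotent case. If $e\in\FIN$ with $|W_e|=n$, then $G_e\cong\mathbb{Z}\times H_1\times\cdots\times H_n$ is a finite direct product of solvable groups, so $G_e$ is solvable of derived length $\max(1,n)$. If $e\in\INF$, then $G_e$ contains $H_n$ as a subgroup for every $n\in\nats$, so $G_e$ is residually solvable but has unbounded derived length and is therefore not solvable. Hence $e\in\FIN$ if and only if $G_e$ is solvable, completing the reduction and the theorem.
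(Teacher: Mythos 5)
Your proposal is correct and follows essentially the same route as the paper: the $\Sigma^0_2$ upper bound via the derived-length formula with computable equality, and a reduction of $\FIN$ built exactly like the nilpotency construction but with the uniformly computable free solvable groups $H_n = F_2/F_2^{(n)}$ in place of the finite wreath products, handled by routine dovetailing since the $H_n$ are infinite. This is precisely the argument the paper sketches.
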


\begin{proof}

The proof is essentially identical to the nilpotence proof except that rather than using the finite groups $W(n)$ in the construction, we use the uniformly computable \emph{free solvable} groups $H_n = F_2/F_2^{(n)}$, where $F_2$ is the free group on two generators, and $F_2^{(n)}$ is the $n$th group in its derived series.  These groups are infinite, so the construction in this case requires routine dovetailing, and we spare the reader the details.

\end{proof}

\section{Orderability}\label{ord}

We now turn our attention to orderability properties, as this was a main motivation at the beginning of this project.

We say a group $G$ is \textit{partially left-ordered} by relation $\preceq$ if the ordering relation is invariant under the left action of the group on itself.  Formally, if for all $a, g$, and $h$ in $G$ $$g\preceq h \to ag \preceq ah.$$

The group is simply \emph{left-ordered} when $\preceq$ is a total order on $G$.  Similarly, the group is \emph{partially bi-ordered} when $\preceq$ is invariant under multiplication from both the left and right, and \emph{bi-ordered} when the ordering is total.  

Every group has a trivial partial order (equality), and any group $G$ with a non-torsion element $a$ admits a non-trivial partial order which includes the chain  $$\cdots \preceq a^{-2} \preceq a^{-1} \preceq 1_G \preceq a \preceq a^2 \preceq \cdots.$$  Note that torsion elements cannot be ordered relative to the group identity.  For instance if $a \neq 1_G$ is an element of order 3, then from $1_G \preceq a$, it follows that $1_G\preceq a  \preceq a^2 \preceq 1_G$, and we deduce that $a=1$.

Every order, partial or total, is equivalently described by its \textit{upper cone}, the set of elements greater than the identity under the ordering, since $x\preceq y $ if and only if $1_G \preceq x^{-1}y$.  A subset $P$ of a group $G$ is the upper cone of a left-partial order if it is a subsemigroup of $G$ and for all non-identity $g\in G$, we have $g\in P \to g^{-1}\not\in P$.  Moreover, $P$ is the upper cone of a partial bi-order if it is a \textit{normal} subsemigroup, i.e., for all $g\in G$, $gPg^{-1}\subseteq P$.  Finally, $P$ is the upper cone of a total order if for each $g\in G$, either $g$ or its inverse is in $P$.

For more on the theory of ordered and orderable algebraic structures, see \cite{Fuchs}, \cite{KM1996}, and \cite{MR77}.

\medskip

\subsection{Partial orderability and torsion}\label{po_sec}

Every group is trivially partially orderable with the upper cone containing only the identity element.  The existence of a \emph{non-trivial} partial ordering is equivalent to the existence of a non-torsion element in the group.  

\begin{cor} Determining whether a group admits a non-trivial partial (left- or bi-) ordering of its elements is $\Sigma^0_2$-complete in the class of recursively presented groups.

\end{cor}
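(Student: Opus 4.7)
The plan is to combine the equivalence recalled immediately before the corollary statement --- a group admits a non-trivial partial (left- or bi-) ordering iff it contains a non-torsion element --- with the $\Pi^0_2$-completeness of detecting torsion in r.p.~groups (Corollary \ref{cor_rpm}). Since ``possesses a non-torsion element'' is precisely the negation of ``is a torsion group,'' the index set in question is the complement, within the class of r.p.~groups, of the torsion index set.

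For membership in $\Sigma^0_2$, the natural characterizing formula
\[
\exists w \in F_G ~ \forall n \in \nats^{>0} ~ w^n \neq_G 1_G
\]
is already $\Sigma^0_2$, because inequality $\neq_G$ is a $\Pi^0_1$ predicate in any r.p.~group. For $\Sigma^0_2$-hardness, I would recycle the torsion reduction from Corollary \ref{cor_rpm}: that reduction is an instance of Theorem \ref{markov} applied to the Markov property ``torsion'' with witnesses $G_+ = \langle x~|~x^2 \rangle$ and $G_- = \langle y~|~ \rangle$, and it produces uniformly in $e$ an r.p.~group $G_e$ such that $G_e$ is torsion iff $e\in \INF$. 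Taking complements, $G_e$ contains a non-torsion element iff $e\in \FIN$, and since $\FIN$ is $\Sigma^0_2$-complete this is precisely the hardness reduction required.

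The only step that demands genuine verification, rather than just bookkeeping, is the asserted biconditional between non-trivial partial orderability and the existence of a non-torsion element --- particularly in the bi-orderable case. The forward direction is immediate for both left- and bi-orders from the prefatory remark that any non-identity element in an upper cone must have infinite order. For the left case the reverse direction is equally immediate: a non-torsion $a$ makes $P = \{a^n : n>0\}$ a sub-semigroup disjoint from $P^{-1}$, hence the upper cone of a non-trivial left-invariant partial order. For the bi- case I expect to replace $P$ by its conjugate closure $\widetilde{P} = \{g a^n g^{-1} : g \in G,\, n > 0\}$ and its sub-semigroup closure; this is automatically normal, and the key point to check is that $\widetilde{P}$ still avoids its inverses, which follows from the fact that $\langle a \rangle \cong \ints$ is bi-orderable and hence $a^n \neq_G g a^{-m} g^{-1} \cdots$ for any positive $n,m$ and products of conjugates. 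This normal-closure verification is the one piece of genuine algebra in the argument; everything else follows formally from the results already established.
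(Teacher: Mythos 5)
Your overall route is the same as the paper's: the corollary is read off as the complement of the $\Pi^0_2$-completeness of torsion detection from Corollary \ref{cor_rpm}, and your left-order half is correct. The genuine gap is in the step you yourself single out as the one piece of real algebra: the asserted equivalence between having a non-torsion element and admitting a non-trivial partial \emph{bi}-order is false in general, and your conjugate-closure argument fails exactly where it needs to succeed. In the infinite dihedral group $D_\infty=\langle r,s~|~s^2,(sr)^2\rangle$ the element $r$ has infinite order, but $srs^{-1}=r^{-1}$, so the normal subsemigroup generated by $r$ (your $\widetilde{P}$) contains both $r$ and $r^{-1}$; in fact every element outside $\langle r\rangle$ is torsion and conjugation by $s$ inverts $\langle r\rangle$, so $D_\infty$ admits no non-trivial partial bi-order at all. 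The appeal to ``$\langle a\rangle\cong\ints$ is bi-orderable'' cannot rule out relations such as $gag^{-1}=a^{-1}$, which is precisely what can go wrong, so the normal-closure verification is not just unfinished bookkeeping --- the statement it is meant to verify is false.

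Both halves of the corollary survive, but not via your formula in the bi-order case. For membership, replace $\exists w\,\forall n\,(w^n\neq_G 1_G)$ (which characterizes non-torsion, hence only the left-order case) by $\exists w\in F_G\,\bigl(w\neq_G 1_G \wedge \forall u,v\in S(w)~ uv\neq_G 1_G\bigr)$, where $S(w)$ is the uniformly c.e.\ normal subsemigroup generated by $w$; since $\neq_G$ is $\Pi^0_1$ in an r.p.~group this is still $\Sigma^0_2$, and a group admits a non-trivial partial bi-order iff such a $w$ exists, because the normal subsemigroup generated by any element of an upper cone lies in that cone. For hardness, the groups from the torsion reduction do work, but for a reason particular to them rather than by the general equivalence: when $e\in\FIN$ the group is $\ints_2\ast\ints\ast\ints\ast\cdots$, which surjects onto $\ints$, and pulling back the positive cone of $\ints$ yields a non-trivial partial bi-order (and a fortiori a partial left-order); when $e\in\INF$ the group is $\ints_2$, which is torsion and admits neither. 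With those two repairs your argument coincides with the paper's intended ``follows immediately'' proof.
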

\begin{proof}
In Corollary \ref{cor_rpm}, it was noted that being a torsion group is $\Pi^0_2$-complete in the class of r.p.~groups, and this corollary follows immediately.
\end{proof}

\begin{cor}

The index set of groups admitting a non-trivial partial  (left- or bi-) ordering is $\Sigma^0_2$-complete in the class of computable groups.

\end{cor}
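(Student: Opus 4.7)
The plan is to mirror the recursively-presented case by reducing the present corollary to the completeness result for torsion already established for computable groups in Theorem \ref{thm_tor_cg}. Recall that a group admits a non-trivial partial (left- or bi-) order precisely when it contains a non-torsion element, since the chain generated by any element of infinite order provides such an order and, conversely, every torsion element $g$ satisfies $1_G \preceq g \preceq g^2 \preceq \cdots \preceq 1_G$ and so cannot be separated from the identity. In particular, the index set in question is exactly the complement, within the computable groups, of the index set of torsion groups.

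For membership in $\Sigma^0_2$, I would invoke the characterizing formula
\[
\exists g \in G ~ \forall n \in \nats^{>0} ~ (g^n \neq_G 1_G),
\]
which is $\Sigma^0_2$ since equality (and hence inequality) is computable in computable groups.

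For $\Sigma^0_2$-hardness, I would reuse the reduction constructed in the proof of Theorem \ref{thm_tor_cg}. That construction produces, uniformly in $e$, a computable abelian group $G_e$ which is a torsion group when $e\in \INF$ and contains a $\ints$-summand (hence a non-torsion element) when $e\in \FIN$. Because $G_e$ is abelian, partial left-orderability and partial bi-orderability are equivalent, and both are equivalent to the presence of a non-torsion element. Thus $e \in \FIN$ if and only if $G_e$ admits a non-trivial partial (left- or bi-) order, which gives a computable reduction from the $\Sigma^0_2$-complete set $\FIN$ to our index set.

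There is no real obstacle here; the main content was already the delicate torsion construction in Theorem \ref{thm_tor_cg}, and the only thing to verify is that the group built there is abelian and that its non-torsion elements, when present, do indeed yield non-trivial partial orders of both the left- and bi-invariant varieties. Both verifications are immediate from the construction (coordinate-wise addition on $\bigoplus_i \ints_{n_i}$, possibly with one $\ints$ factor), completing the proof.
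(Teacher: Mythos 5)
Your proposal is correct and follows essentially the same route as the paper: the paper's proof is simply the observation that the corollary follows immediately from Theorem \ref{thm_tor_cg}, via the equivalence (stated in Section \ref{ord}) between admitting a non-trivial partial order and possessing a non-torsion element. Your write-up just makes explicit the $\Sigma^0_2$ characterizing formula and the fact that the reduction $e\mapsto G_e$ from that theorem works unchanged (with $\FIN$ in place of $\INF$), noting helpfully that $G_e$ is abelian so the left- and bi-invariant cases coincide in the hardness construction.
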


\begin{proof}
This observation follows immediately from Theorem \ref{thm_tor_cg}.
\end{proof}

\subsection{Bi-orderability}\label{bo_sec}

We consider now the question of total orderability.  There is a condition attributable independently to Ohnishi, Lo\'{s}, and Fuchs \cite{Ohnishi1952, Los1954, Fuchs1958} which provides necessary and sufficient \emph{first order} conditions for orderability.

\begin{theorem}[Ohnish (1952), \L o\'{s} (1954), Fuchs (1958)]

Group $G$ is left-orderable if and only if for every finite sequence  $(g_1,\ldots, g_n)$ of non-identity elements of $G$, there is a sequence $(\epsilon_1, \ldots, \epsilon_n)\in \{\pm 1\}^n$ so that the subsemigroup of $G$ generated by $\{g_1^{\epsilon_1}, \ldots, g_{n}^{\epsilon_{n}}\}$ does not contain the identity $1_G$ of $G$.

\end{theorem}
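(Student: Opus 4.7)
The plan is to handle the two directions separately. The forward direction is straightforward: given a left-order on $G$ with positive cone $P = \{g : 1_G \prec g\}$ and any finite sequence $(g_1, \ldots, g_n)$ of non-identity elements, I would choose $\epsilon_i = +1$ if $g_i \in P$ and $\epsilon_i = -1$ otherwise. Every $g_i^{\epsilon_i}$ then lies in $P$, and since $P$ is a subsemigroup with $1_G \notin P$, the subsemigroup generated by these elements is contained in $P$ and so misses $1_G$.

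For the converse I would argue by compactness. Consider the space $X = \{+1, -1\}^{G \setminus \{1_G\}}$ with the product topology, which is compact by Tychonoff's theorem. For each finite inverse-closed $F \subseteq G \setminus \{1_G\}$, let $A_F \subseteq X$ be the set of sign functions $\epsilon$ satisfying $\epsilon(g^{-1}) = -\epsilon(g)$ for all $g \in F$ and such that the subsemigroup generated by $\{g^{\epsilon(g)} : g \in F\}$ avoids $1_G$. The crucial observation is that any valid sign choice on a sequence listing $F$ is forced to be internally consistent: if some $g$ appears at two positions with different signs, or if $g$ and $g^{-1}$ appear with equal signs, then the resulting multiset of semigroup generators contains $g^{\pm 1}$ and $g^{\mp 1}$, which multiply to $1_G$. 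So the hypothesis supplies a witness in $A_F$, and each $A_F$ is nonempty. As $A_F$ depends only on $\epsilon|_F$ it is clopen, and $A_{F_1} \cap A_{F_2} \supseteq A_{F_1 \cup F_2}$, so the family has the finite intersection property; compactness then yields some $\epsilon^* \in \bigcap_F A_F$.

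I would then define $P = \{g \neq 1_G : \epsilon^*(g) = +1\}$ and verify that $P$ is the positive cone of a left-order. The consistency condition gives $G \setminus \{1_G\} = P \sqcup P^{-1}$ and $1_G \notin P$ directly. For closure under multiplication, given $g, h \in P$ with $gh \neq 1_G$, the identity $g \cdot h \cdot (gh)^{-1} = 1_G$ forces $\epsilon^*(gh) = +1$: otherwise $(gh)^{-1}$ would be the positive version of $gh$, and applying the defining property of $A_F$ with $F = \{g, h, gh, g^{-1}, h^{-1}, (gh)^{-1}\}$ would be contradicted. The relation $g \preceq h$ defined by $g^{-1}h \in P \cup \{1_G\}$ is then the sought left-invariant total order on $G$.

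The main obstacle is the consistency point in the compactness step: the theorem statement permits independent sign choices at each position of the finite sequence, and one must recognize that these choices are in fact forced into a coherent function on $G \setminus \{1_G\}$ by the requirement that $1_G$ avoid the generated semigroup. Once this is in hand the rest of the argument is routine; as an alternative one could build $P$ directly by Zorn's lemma over a well-ordering of $G \setminus \{1_G\}$, extending the partial positive cone one inversion-pair at a time while using the finite hypothesis to ensure $1_G$ never enters, but I find the compactness framing cleaner.
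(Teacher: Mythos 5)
Your proof is correct. Note that the paper does not actually prove this statement --- it is quoted as a classical theorem with attribution to Ohnishi, \L o\'{s}, and Fuchs, and only its quantifier complexity (formula (\ref{OLF})) is used later --- so there is no in-paper argument to compare against; your compactness proof is essentially the standard one from the ordered-groups literature (Tychonoff on $\{\pm1\}^{G\setminus\{1_G\}}$, or equivalently Zorn's lemma or the compactness theorem of propositional logic). The one genuinely delicate point, which you identify and handle correctly, is that the theorem's hypothesis allows independent signs at each position of the sequence, and one must observe that avoidance of $1_G$ forces repeated entries to receive equal signs and inverse pairs to receive opposite signs, so that a witness for an inverse-closed finite $F$ really does yield a coherent element of $A_F$; your monotonicity $A_{F_1\cup F_2}\subseteq A_{F_1}\cap A_{F_2}$, the clopenness of each $A_F$, and the verification that $P=\{g\neq 1_G:\epsilon^*(g)=+1\}$ is a subsemigroup with $G\setminus\{1_G\}=P\sqcup P^{-1}$ (using $F$ the inverse closure of $\{g,h,gh\}$ and the relation $g\cdot h\cdot (gh)^{-1}=1_G$) are all sound. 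One remark worth making explicit: if some $g\neq 1_G$ satisfies $g=g^{-1}$, the requirement $\epsilon(g^{-1})=-\epsilon(g)$ is unsatisfiable, but in that case the hypothesis already fails for the one-term sequence $(g)$ (since $\sgr(g^{\pm1})$ contains $g^2=1_G$), so the converse is vacuous there and your argument is unaffected.
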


If we write $\vec{g}$ and $\vec{\epsilon}$ for these sequences, and write $\sgr(\vec{g}^{~\vec{\epsilon}})$ for the subsemigroup generated by $\{g_1^{\epsilon_1}, \ldots, g_{n}^{~\epsilon_{n}}\}$, it is easier to see the complexity of this condition in terms of the arithmetical hierarchy:  

\begin{equation}\label{OLF}\forall \vec{g} \in (G-{\{1_G\}})^{<\omega} ~ \exists \vec{\epsilon}\in \{\pm 1\}^{|\vec{g}|} ~ 1_G\not\in \sgr(\vec{g}^{~\vec{\epsilon}}).
\end{equation}

\noindent Since the existential quantifier is bounded, it may be disregarded. Moreover, if equality is computable in the group, the subsemigroup $\sgr(\vec{g}^{\vec{\epsilon}})$ can be algorithmically enumerated and computably checked against the identity, so the matrix of the formula is equivalent to a $\Pi^0_1$ formula.  When the word problem in group $G$ is computable, the scope of the first universal quantifier, $G-\{1_G\}$, is a computable set.  Hence, the index set of groups that are left-orderable is indeed $\Pi^0_1$ for computable groups.

Exactly the same formula applies to describe the class of bi-orderable groups with only one modification:  Rather than requiring that the subsemigroup $\sgr(\vec{g}^{~\vec{\epsilon}})$ not include the identity, it is required that the \emph{normal} subsemigroup not include the identity.  We will write $S(\vec{g}^{~\vec{\epsilon}})$ for the normal subsemigroup generated by $\vec{g}^{~\vec{\epsilon}}$.

It is natural to ask if it is ``easier'' to determine whether a group admits a bi-ordering if we know already that it admits a left-ordering.  We show that in both the class of computable groups and the class of r.p.~groups, the answer is that it is \textit{not} easier.

\begin{theorem}
The set of indices of groups that admit a bi-ordering is $\Pi^0_1$-complete in the class of computable left-orderable groups.  
\end{theorem}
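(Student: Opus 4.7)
The $\Pi^0_1$ upper bound is immediate from adapting criterion (\ref{OLF}) to bi-orderings: replace the subsemigroup $\sgr(\vec g^{~\vec\epsilon})$ by the normal subsemigroup $S(\vec g^{~\vec\epsilon})$. In a computable group both the domain of the outer universal quantifier and the enumeration of $S(\vec g^{~\vec\epsilon})$ are computable/c.e., so non-membership of $1_G$ in $S(\vec g^{~\vec\epsilon})$ is $\Pi^0_1$; the inner existential is bounded and may be absorbed, leaving a $\Pi^0_1$ characterization of bi-orderability within the class of computable groups, and a fortiori within the subclass of computable left-orderable groups.

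For $\Pi^0_1$-hardness, the plan is to reapply the construction of Theorem \ref{markov_comp} with specific Markov witnesses $G_+ = \ints$ and $G_- = K$, where $K = \langle a, b \mid bab^{-1} = a^{-1} \rangle$ is the Klein bottle group. Both are finitely presented and computable. The key algebraic facts are: (i) $K$ is left-orderable, being a torsion-free poly-$\ints$ group; and (ii) no group containing $K$ as a subgroup is bi-orderable, because the relation $bab^{-1} = a^{-1}$ makes $a$ conjugate to its own inverse, whereas in a bi-ordered group conjugate elements must share a sign relative to the identity while an element and its inverse have opposite signs, forcing $a = 1_K$, a contradiction. Thus bi-orderability is a Markov property with these choices of $G_+, G_-$.

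Running the Theorem \ref{markov_comp} construction on this pair produces, uniformly in $e$, a computable group $G_e$ isomorphic to $\ints$ when $\varphi_e(e) \uparrow$ and to $\ints \times K$ when $\varphi_e(e) \downarrow$. Since direct products of left-orderable groups are left-orderable, $G_e$ is left-orderable in either case, so the reduction lands in the index set of computable left-orderable groups as required. Then $G_e$ is bi-orderable iff $G_e \cong \ints$ iff $e \in \overline{K}$, yielding a many-one reduction from the $\Pi^0_1$-complete set $\overline{K}$. The main subtlety beyond invoking Theorem \ref{markov_comp} is the requirement that $G_-$ be simultaneously left-orderable and bi-orderability-obstructing in any supergroup; the Klein bottle group is the prototypical example supplying both features.
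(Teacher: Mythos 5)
Your proof is correct, but the hardness argument takes a genuinely different (more modular) route than the paper's. The paper gives a bespoke construction reducing the $\Pi^0_1$-complete set $\{e : W_e=\emptyset\}$: it begins enumerating the diagram of the free group $F_2$, and if some element ever enters $W_e$ at stage $t$ it collapses the group to the one-relator group $\langle a,b \mid a^t=b^t\rangle$, invoking Magnus' solution of the word problem for one-relator groups to keep the diagram computable; the outcome is $F_2$ (bi-orderable) versus a left-orderable group without unique roots. You instead reuse the product construction of Theorem \ref{markov_comp} with witnesses $G_+=\ints$ and $G_-$ the Klein bottle group, adding exactly the observation needed to restrict to the subclass: both possible outputs, $\ints$ and $\ints\times G_-$, are left-orderable, and the latter is not bi-orderable because it contains an element conjugate to its inverse. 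Note that the two obstructions coincide algebraically: for $t=2$ the paper's group $\langle a,b\mid a^2=b^2\rangle$ \emph{is} the Klein bottle group, so the difference is purely in the computability-theoretic packaging. Your version is shorter and avoids re-verifying computability of the collapsed group (no appeal to Magnus needed, since Theorem \ref{markov_comp} already produces a computable diagram uniformly in $e$), at the cost of depending on that earlier construction; the paper's version is self-contained and also exhibits a bi-orderable \emph{non-abelian} outcome ($F_2$), which is aesthetically stronger but not needed for the statement. Your upper-bound argument is the same as the paper's. One cosmetic point: you use $K$ both for the Klein bottle group and (implicitly, via $\overline{K}$) for the halting set, which collides with the paper's notation in Theorem \ref{markov_comp}; rename one of them.
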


\begin{proof}  

The detection problem is in $\Pi^0_1$ by virtue of formula (\ref{OLF}) above.

For $\Pi^0_1$-hardness, we construct for each $e\in \nats$, a group $G_e$ which is bi-orderable if and only if $W_e$ is empty, and left-orderable in any case.  Recalling that the index set of the empty set is $\Pi^0_1$-complete, we can accomplish this as follows:  Begin enumerating reduced words on two generators, $a$ and $b$, and their inverses.  At stage $s$, the approximation to the universe of $G_e$ should include all words on $a,b,a^{-1},b^{-1}$ of length less than or equal to $s$, and the approximation to the multiplication table should include all the reduced free products of these having length less than or equal to $s$.  

The first time an element enters $W_e$ at stage $t$, declare the product $a^tb^{-t} = ()$.  In subsequent stages, continue to enumerate the diagram of an isomorphic copy of $\langle a,b~|~a^t=b^t\rangle$.  As a one-relator group, it has decidable word problem, and the rest of the diagram can be effectively determined \cite{Magnus1932}.

If no element ever enters $W_e$, we'll have built a copy of the free group on two generators, which is bi-orderable (though certainly not trivially so\footnote{One can describe a  bi-ordering via a Magnus expansion.  To bi-order $F_2=\langle a,b ~|~\rangle$, consider the ring of formal power series in non-commuting variables $X_a$ and $X_b$, $\ints[X_a,X_b]$.  The map induced by \begin{equation*}\begin{array}{l}

a \to 1+X_a \\
a^{-1} \to 1 - X_a + X_a^2 -X_a^3 +\cdots  \\

b\to 1+ X_b \\

b^{-1} \to 1 - X_b + X_b^2 -X_b^3 +\cdots 

\end{array}
\end{equation*} is an injective homomophism from $F_2$ to $\ints[X_a,X_b]$, the image of which is the multiplicative subgroup generated by $F'_2=\{1+ p(X_a, X_b) ~|~$each term in $p(X_a, X_b)$ has degree at least 1.$\}$.  One can order $\ints[X_a,X_b]$ by writing each power series in a standard form: write the terms in increasing degree, and within each degree, order the terms lexicographically according to subscripts.  To compare two series, compare the coefficients of the first term on which they differ.  The ordering inherited by the subgroup $F'_2$ pulls back to an ordering on $F_2$ via the isomorphism described above.}). If $W_e\neq \emptyset$, $G_e$ is a group with presentation $\langle a,b~|~a^n=b^n\rangle$, which, though torsion-free and left-orderable, is not bi-orderable for bi-orderable groups must have unique roots.  

\end{proof}

\begin{theorem}

Identification of bi-orderability is $\Pi^0_2$-complete in the class of recursively presented left-orderable groups.

\end{theorem}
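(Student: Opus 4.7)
The plan is to combine the free-product strategy of Theorem~\ref{markov} with the witnesses from the preceding theorem. For the upper bound, the bi-orderability analog of the Ohnishi--\L o\'{s}--Fuchs criterion~(\ref{OLF}) says that $G$ is bi-orderable iff $\forall \vec g~\exists \vec\epsilon~(\textrm{some } g_i =_G 1_G ~\vee~ 1_G \not\in S(\vec g^{~\vec\epsilon}))$, where $S(\vec g^{~\vec\epsilon})$ is the normal subsemigroup of $G$ generated by $\vec g^{~\vec\epsilon}$. In an r.p.~group, $S(\vec g^{~\vec\epsilon})$ is computably enumerable, so $1_G \not\in S(\vec g^{~\vec\epsilon})$ is $\Pi^0_1$; together with the $\Sigma^0_1$ disjunct and the bounded quantifier $\exists \vec\epsilon$, the matrix is a boolean combination of $\Sigma^0_1$ and $\Pi^0_1$, and after the outer $\forall \vec g$ the whole formula is $\Pi^0_2$. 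Restriction to the left-orderable subclass does not change this.

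For $\Pi^0_2$-hardness, I would reduce $\INF$ to the index set of bi-orderable groups within the r.p.~left-orderable groups. Let $G_+ = \langle a,b~|~\rangle = F_2$ be the free group of rank 2, a bi-orderable group, and for each $i \in \nats$ let $K_i = \langle a_i, b_i ~|~ a_i^2 = b_i^2 \rangle$ be a disjoint copy of the group used in the preceding theorem, which is left-orderable but not bi-orderable by the unique roots argument. Mirroring the construction in Theorem~\ref{markov}, begin with $G_{e,0} = F_2 \ast K_0 \ast K_1 \ast \cdots$, and whenever a new element enters $W_e$, add the generators of the next surviving factor $K_n$ to the relators of the current presentation and increment $n$. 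The resulting limit $G_e$ is recursively presented uniformly in $e$.

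If $W_e$ is infinite, every $K_i$ is eventually trivialized, so $G_e \cong F_2$ and is bi-orderable. If $W_e$ is finite of cardinality $m$, then $G_e \cong F_2 \ast K_m \ast K_{m+1} \ast \cdots$, which contains $K_m$ as a subgroup and therefore is not bi-orderable. In both cases $G_e$ is a free product of left-orderable groups, so by Vinogradov's theorem it is itself left-orderable; this keeps the image of the reduction inside the reference class for every $e$. The main obstacle is justifying that the limit presentation really is the free product of the surviving factors in the finite case, so that $K_m$ embeds and its non-bi-orderability is inherited by $G_e$; this is the same subtlety faced in Theorem~\ref{markov}, resolved by observing that adjoining the generators of a particular $K_n$ as relators trivializes exactly that factor while leaving the free-product structure of the other factors intact.
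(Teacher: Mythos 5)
Your proof is correct. The upper bound is the same argument as the paper's: quantify over tuples of words, add the escape disjunct for tuples containing a word that is $=_G 1_G$ (this is exactly how the paper repairs the fact that $G-\{1_G\}$ is only co-c.e.\ in an r.p.\ group), note that $1_G\in S(\vec g^{~\vec\epsilon})$ is $\Sigma^0_1$, absorb the bounded $\exists\vec\epsilon$, and observe that $\forall$ over a $\Sigma^0_1\vee\Pi^0_1$ matrix is $\Pi^0_2$; the paper just writes out the resulting prenex form explicitly. The hardness argument reaches the same conclusion by a genuinely different, though closely related, construction. You recycle the Theorem \ref{markov} machinery wholesale with the witnesses $G_+=F_2$ and $G_-=\langle a,b~|~a^2=b^2\rangle$: start from $F_2\ast K_0\ast K_1\ast\cdots$ and kill whole factors as $W_e$ grows, so that $W_e$ infinite yields $F_2$ (bi-orderable) and $W_e$ finite yields $F_2\ast K_m\ast K_{m+1}\ast\cdots$ (not bi-orderable by unique roots), with Vinogradov's theorem keeping every $G_e$ left-orderable and hence inside the reference class. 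The paper instead uses a leaner bespoke presentation in which only one ``bad'' relation $x_n^2y_n^{-2}$ is alive at any time, the remaining generators being free until their turn; when $W_e$ is infinite everything is eventually killed and $G_e$ is the trivial group, and when $|W_e|=n$ one gets (free group of infinite rank)$\ast\langle x_n,y_n~|~x_n^2y_n^{-2}\rangle$. Both arguments rest on the same two facts --- failure of unique roots in the Klein-bottle-type group obstructs bi-orderability, and free products of left-orderable groups are left-orderable --- but your version makes explicit that the only new ingredient beyond Theorem \ref{markov} is the closure of left-orderability under free products, whereas the paper's construction avoids carrying infinitely many non-bi-orderable factors from the outset. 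Either route is fine; your target groups in the two outcomes ($F_2$ versus the trivial group) differ from the paper's but serve the reduction equally well.
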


\begin{proof}

The formula in the previous proof does not suffice to put the problem in $\Pi^0_1$ as the scope of the opening quantifier, $G-\{1_G\}$, is not a computable set (it is co-c.e.).  Suppose $G$ is recursively presented as $\bra x_0, x_1, \ldots ~|~ R_0, R_1, \ldots\ket$.    As before, we write $S(\vec{g}^{~\vec{\epsilon}})$ for the normal subsemigroup generated by $\{g_1^{\epsilon_1}, \ldots, g_{n}^{~\epsilon_{n}}\}$.  This is also a recursively enumerable set, so let $S(\vec{g}^{~\vec{\epsilon}})_s$ be its $s$th finite approximation.

In prenex normal form, with quantifiers over computable sets only, and computable matrix, we give a $\Pi^0_2$ characterization of bi-orderability of recursively presented groups with the formula $$\forall \vec{g}\in F_G^{<\omega} ~ \exists \vec{\epsilon}\in \{\pm 1\}^{|\vec{g}|} ~ \forall t\in \nats ~ \exists s\in \nats ~(\vec{g}\cap 1_{G,s} =\emptyset \to 1_{G,t}\cap S(\vec{g}^{\vec{\epsilon}})_t = \emptyset).$$

To establish completeness, we describe an effective reduction procedure that yields for any $e\in \nats$, a recursive presentation $P_e$ of a group that is bi-orderable if and only if $e\in \INF$.

\medskip

\noindent\emph{Construction.}

\noindent \textit{Stage 0.}  Let $P_0 = \bra x_0, y_0,  x_1, y_1,  \ldots ~|~ x_0^2y_0^{-2} \ket$. 

\smallskip

\noindent \textit{Stage $s+1$.}  If $W_{e,s+1} - W_{e,s} \neq \emptyset$, then let $n=|W_{e,s+1}|$ and add $x_n$, $y_n$, and $x_{n+1}^2y_{n+1}^{-2}$ to the set of relators of $G_s$ to obtain $G_{s+1}$. 

If $W_{e,s+1} - W_{e,s} = \emptyset$, do nothing, and proceed to the next stage.

\noindent \emph{End of construction.} 

\medskip

If $e\not\in \INF$, the resulting group has presentation $$\bra x_0, y_0,  x_1, y_1,  \ldots ~|~ x_0^2y_0^{-2}, \ldots x_n^2y_n^{-2}, x_0, y_0, \ldots x_{n-1}, y_{n-1} \ket,$$ where $n = |W_e|$.  The group itself is the free product of a free group on infinitely many generators ($\bra x_{n+1}, y_{n+1}, \ldots ~|~\ket$) with the group with presentation $\bra x_n, y_n ~|~ x_n^2y_n^{-2}\ket$, which does not have unique roots.  The group is, however, left-orderable.   If $e\in \INF$, the resulting group is trivial, so bi-orderable.  The theorem follows.



\end{proof}

\end{document}